\newcommand{\e}{\mathbb{E}}
\newcommand{\p}{\mathbb{P}}
\newtheorem{defi}{Definition}[section]
\newtheorem{lemma}[defi]{Lemma}
\newtheorem{theorem}[defi]{Theorem}
\newtheorem{kor}[defi]{Corollary}
\newtheorem{rem}[defi]{Remark}
\newtheorem{ex}[defi]{Example}
\begin{document}

\title{On Lamperti transformation and characterisations of discrete random fields}

\renewcommand{\thefootnote}{\fnsymbol{footnote}}

\author{Marko Voutilainen\footnotemark[1],\, Lauri Viitasaari\footnotemark[2],\, Pauliina Ilmonen\footnotemark[3]}

\footnotetext[1]{Turku School of Economics, Department of Accounting and Finance, FI-20014 University of Turku, Finland}

\footnotetext[2]{Uppsala University, Department of Mathematics, Box 480, 751 06 Uppsala, Sweden}

\footnotetext[3]{Aalto University, Department of Mathematics and Systems Analysis, P.O. Box 11100, FI-00076 Aalto, Finland}

\maketitle

\begin{abstract}
\noindent
In this article we characterise discrete time stationary fields by difference equations involving stationary increment fields and self-similar fields. This gives connections between stationary fields, stationary increment fields and, through Lamperti transformation, self-similar fields. Our contribution is a natural generalisation of recently proved results covering the case of stationary processes.
\end{abstract}

{\small
\medskip

\noindent
\textbf{AMS 2010 Mathematics Subject Classification:} 60G60, 60G10, 60G18
\medskip

\noindent
\textbf{Keywords:} random fields, stationary fields, self-similar fields, Lamperti transformation, fractional Ornstein-Uhlenbeck fields
}

%%%%%%%%%%%%%%%%%%%%%%%%%%%%%%%%%%%%%%%%%%%%%%%%%%%%%%%%%%%%%%%%%%%%%%%%%%%%%%%

%\tableofcontents
%%%%%%%%%%%%%%%%%%%%%%%%%%%%%%%%%%%%%%%%%%%%%%%%%%%%%%%%%%%%%%%%%%%%%%%%%%%%%%%
\section{Introduction}
Stationary processes $X = (X_t)_{t\in T}$ have numerous applications in many different fields, and they are a topic of active research. Similarly, self-similar processes and stationary increment processes have many applications in various disciplines of science. For details on self-similar processes, we refer to the monograph \cite{embrechts2002selfsimilar} and the references therein.

All of these three classes are intimately connected. Indeed, it was already observed by Lamperti in \cite{Lamperti} that there exists a one-to-one correspondence between stationary processes and self-similar processes. Later on, this connection was used in \cite{viitasaari2016} to obtain relation between stationary processes and stationary increment processes in continuous time, i.e. $T = \mathbb{R}$, through Langevin equation
\begin{equation}
\label{eq:langevin-intro}
dX_t = - \theta X_t dt + dG_t,
\end{equation}
where $\theta>0$ is a parameter, $X$ is stationary, and $G$ has stationary increments (along with certain other properties). Most notably, this gives rise to the well-known Ornstein-Uhlenbeck process when one plugs in $G = W$, the Brownian motion. Connection \eqref{eq:langevin-intro} was later extended for discrete time processes $X$, i.e. $T = \mathbb{Z}$, in \cite{voutilainen2017model}, where the authors proved discrete analogue 
\begin{equation}
\label{eq:difference-intro}
\Delta X_{t} = - \theta X_{t-1} + \Delta G_t
\end{equation}
of \eqref{eq:langevin-intro}, and studied the estimation of the unknown parameter $\theta$. A vector-valued version was later provided in \cite{voutilainen2021vector} and \cite{voutilainen2020modeling}, covering both continuous and discrete time cases together with estimation of the unknown parameter matrix.

Similarly to stationary processes, stationary fields form an important subclass of random objects. In this case, $X=(X_t)_{t\in T}$ with $T=\mathbb{R}^N$ in continuous time or $T= \mathbb{Z}^N$ in discrete time (naturally, $T$ can be a more general parameter space). Also, self-similarity and stationarity of the increments are wanted features in many applications.
However, while the notion of stationarity is essentially unchanged in the context of random fields, the notion of self-similarity and stationarity of the increments become more complicated when $t$ is multidimensional. For notion of self-similarity for fields, one typically introduces componentwise self-similarity and considers $H$-self-similarity with $H$ as an $N$-dimensional vector of componentwise self-similarity indices, see e.g. \cite{samorodnitsky1996stable, genton2007self}. In \cite{genton2007self} a version of the Lamperti theorem was proved (in continuous time) for fields, providing a connection between stationary fields and self-similar fields. For other notions of self-similarity for fields, see for example \cite{clausel2010gaussian} and \cite{bierme2007operator}.

The notion of stationary increments for fields is even more complicated due to the fact that the definition of increment is not obvious. One approach is to consider \emph{rectangular increments}, where increments are taken over $N$-dimensional rectangulars. Gaussian self-similar fields and Gaussian rectangular increment fields were studied, again in continuous time, e.g. in \cite{makogin2019gaussian} and \cite{makogin2015example}. 

In this article we extend the Characterisation \eqref{eq:difference-intro} provided in \cite{voutilainen2017model} to discrete time fields, providing a connection between stationary fields, self-similar fields, and stationary increment fields. More precisely, we provide a characterisation of the type (see Theorem \ref{theo:giff})
$$
X_t = \langle\hat{\Theta},\hat{X}^-_t\rangle + \Delta_t G.
$$
Here $\hat{\Theta}$ is a vector of parameters, and $\hat{X}_t^-$ is a vector of ''previous value'' consisting of previous values in different coordinate directions. 
Our notion of increment $\Delta_t G$ corresponds to the notion of stationary rectangular increments of \cite{makogin2019gaussian}, cf. Remark \ref{rem:connection}. As such and exactly as in \cite{viitasaari2016,voutilainen2017model} in the case of processes, we obtain correspondence between stationary fields, stationary rectangular increment fields, and self-similar fields.

The rest of the article is organised as follows. In Section \ref{sec:main} we introduce and prove our main results. We introduce our notation and main definitions in Section \ref{subsec:prel}, while our main results and their proofs are presented in Section \ref{subsec:results}. In Section \ref{subsec:example} we briefly illustrate how our characterisation can be used to construct discrete time fractional Ornstein-Uhlenbeck fields, extending notions of (generalized) Ornstein-Uhlenbeck processes of \cite{viitasaari2016}. We end the paper with conclusions, Section \ref{sec:conclusion}, describing future directions, in particular to cover continuous time parameter space and statistical inference.

\section{Connections between stationary, self-similar, and stationary increment fields}
\label{sec:main}
\subsection{Preliminaries and notations}
\label{subsec:prel}
We begin with by introducing some definitions and notations.
\begin{defi}[Stationarity]
A random field $X = (X_t)_{t\in\mathbb{Z}^N}$ is stationary if
$$(X_{t+s})_{t\in\mathbb{Z}^N} \overset{\text{law}}{=} (X_t)_{t\in\mathbb{Z}^N}$$
for every $s\in\mathbb{Z}^N$ in the sense of finite dimensional distributions.
\end{defi}

\begin{defi}[Self-similarity]
\label{defi:self-similarity}
Let $Y = (Y_{e^t})_{t\in\mathbb{Z}^N} = (Y_{e^{t_1}, \dots, e^{t_N}})_{t\in\mathbb{Z}^N}$ be a random field. In addition, let $\Theta = (\theta_1, \dots, \theta_N)\in (0,\infty)^N$ be a positive multi-index.
If 
$$(Y_{e^{t+s}})_{t\in\mathbb{Z}^N} \overset{\text{law}}{=} (e^{\langle s,\Theta\rangle} Y_{e^t})_{t\in\mathbb{Z}^N}$$
for every  $s\in\mathbb{Z}^N$, where $\langle s,\Theta\rangle$ is the standard inner product of vectors, then $Y$ is a $\Theta$-self-similar random field.
\end{defi}
\begin{rem}
The exponential terms in Definition \ref{defi:self-similarity} are introduced in order to take into account the discrete nature of the field. Definition \ref{defi:self-similarity} is analogous to the classical definition in continuous time. See e.g. \cite{Lamperti} for the definition in the one parameter continuous case and \cite{viitasaari2016} for the definition in the one parameter discrete case.
\end{rem}
The following definition provides a notion of Lamperti transformation in our setting.
\begin{defi}[Lamperti]
\label{defi:lamperti}
Let $\Theta = (\theta_1, \dots, \theta_N)\in (0,\infty)^N$. The Lamperti transformation $\mathcal{L}_\Theta$ and its inverse $\mathcal{L}^{-1}_\Theta$ for discrete random fields are defined by
\begin{align*}
(\mathcal{L}_\Theta X)_{e^t} &= e^{\langle t,\Theta\rangle} X_t, \quad t\in\mathbb{Z}^N,\\
(\mathcal{L}^{-1}_\Theta Y)_t &= e^{-\langle t,\Theta\rangle} Y_{e^t}, \quad  t\in\mathbb{Z}^N.
\end{align*}
\end{defi}

\begin{rem}
The formulae in Definitions \ref{defi:self-similarity} and \ref{defi:lamperti} differ slightly from the standard forms in continuous settings. In the case of random fields, the definition of component-wise self-similarity and the corresponding Lamperti transformation together with a one-to-one correspondence between self-similar and stationary fields was presented in \cite{genton2007self}. In comparison, our definitions are obtained via change of variables from the standard ones, ensuring that we stay within our discrete parameter set when applying the transformation.
\end{rem}

\begin{defi}[Increments]
The square increment $\Delta_t X$ of a field $(X_t)_{t\in\mathbb{Z}^N}$ at a point $t = (t_1,\dots,t_N)$ $\in\mathbb{Z}^N$ is given by
\begin{equation}
\label{eq:increments}
\Delta_t X = \sum_{(i_1,\dots,i_N)\in\{0,1\}^N} (-1)^{\sum_{l=1}^N i_l} X_{t_1-i_1,\dots,t_N-i_N}.
\end{equation}
\end{defi}
\begin{ex}
In the particular case $N=2$, we have 
$$
\Delta_t X =X_{t_1,t_2} + X_{t_1-1,t_2-1} - X_{t_1-1,t_2} - X_{t_1,t_2-1}.
$$
\end{ex}
\begin{defi}[Stationary increment field]
\label{def:stat-inc}
A field $X = (X_t)_{t\in\mathbb{Z}^N}$ has stationary increments if the increment field $(\Delta_t X)_{t\in\mathbb{Z}^N}$ is stationary. That is
$$(\Delta_{t+s} X)_{t\in\mathbb{Z}^N} \overset{\text{law}}{=} (\Delta_t X)_{t\in\mathbb{Z}^N}$$
for every $s\in\mathbb{Z}^N$ in the sense of finite dimensional distributions.
\end{defi}

\begin{rem}
\label{rem:connection}
The authors in \cite{makogin2019gaussian} introduced a continuous time analogous notion of \emph{strictly stationary rectangular increments} by assuming stationarity of the increments over arbitrary rectangular increments. In comparison, in our definition, we consider stationary increments over unit rectangulars. However, due to the discrete nature of our index space $t\in \mathbb{Z}^N$, one can show that our definition is equivalent to assuming stationarity over arbitrary (discrete) rectangular increments. 
\end{rem}

We also need a notion of \emph{previous value} that is not so straightforward in a multi-parameter setting.
\begin{defi}[Previous value]
The previous value of the field $X=(X_t)_{t\in\mathbb{Z}^N}$ at a point $t=(t_1,\dots,t_N)\in\mathbb{Z}^N$ is given by
$$X_t^-  = X_t - \Delta_t X = \sum_{\substack{(i_1,\ldots,i_N)\in\{0,1\}^N\\ (i_1,\ldots,i_N)\neq \bf{0}}} (-1)^{1+\sum_{l=1}^N i_l} X_{t_1-i_1,\dots,t_N-i_N}.$$
\end{defi}
\begin{rem}
Note that in our definition of the previous value, we take into account the terms in \eqref{eq:increments} that have a smaller index in at least one of the coordinate directions. Indeed, in the two-dimensional case, we have
$$
\Delta_t X = X_{t_1,t_2} + X_{t_1-1,t_2-1} - X_{t_1-1,t_2} - X_{t_1,t_2-1}
$$
while the previous value is given by
$$
X^-_t = X_{t_1-1,t_2} + X_{t_1,t_2-1} - X_{t_1-1,t_2-1}.
$$
\end{rem}
\begin{defi}[Inner product $\langle \hat{\Theta},\hat{X}^-_t\rangle$]
\label{defi:inner2}
Let $\Theta = (\theta_1, \dots, \theta_N)\in (0,\infty)^N$ and $X=(X_t)_{t\in\mathbb{Z}^N}$ be a field. We define vectors $\hat{\Theta}$ and $\hat{X}^-_t$ of length $2^N-1$ having elements of the forms
$$ (-1)^{1+\sum_{l=1}^N i_l}e^{-\langle i,\Theta\rangle} \quad\text{and}\quad  X_{{t_1-i_1},\dots,{t_N-i_N}},$$
respectively, where $i=(i_1,\dots,i_N)\in\{0,1\}^N, i\neq \bf{0}.$
Then the inner product of the vectors is
$$\langle\hat{\Theta},\hat{X}^-_t\rangle= \sum_{\substack{(i_1,\dots,i_N)\in\{0,1\}^N\\ i\neq \bf{0}}} (-1)^{1+\sum_{l=1}^N i_l} e^{-\langle i,\Theta\rangle} X_{{t_1-i_1},\dots,{t_N-i_N}}.$$
\end{defi}
\begin{ex}
In the two-dimensional case we obtain that, for $\Theta = (\theta_1, \theta_2)\in (0,\infty)^2$ and $X=(X_t)_{t\in\mathbb{Z}^2}$, the vectors $\hat{\Theta}$ and $\hat{X}^-_t$ are given as
$$\hat{\Theta} = (e^{-\theta_1}, e^{-\theta_2}, -e^{-\theta_1 -\theta_2})\quad\text{and}\quad 
\hat{X}_t^- = \begin{pmatrix}
X_{t_1-1, t_2}\\
X_{t_1, t_2-1}\\
X_{t_1-1, t_2-1}
\end{pmatrix}
$$
and we have
\begin{equation}
\label{eq:innerproduct-2d}
\langle\hat{\Theta},\hat{X}^-_t\rangle= e^{-\theta_1} X_{t_1-1, t_2} + e^{-\theta_2} X_{t_1, t_2-1} - e^{-\theta_1 -\theta_2} X_{t_1-1,t_2-1}.
\end{equation}
\end{ex}
\begin{rem}
The vectors $\hat{\Theta}$ and $\hat{X}^-_t$ are unique up to permutations of their elements.
\end{rem}

\begin{defi}[Class $\mathcal{G}_{{\Theta}}$]
\label{defi:class}
Let $\Theta = (\theta_1, \dots, \theta_N)\in (0,\infty)^N$. Let $G = (G_t)_{t\in\mathbb{Z}^N}$ be a stationary increment field with $G_t = 0$ for all $t$ such that $\sum_{i=1}^N t_i \in\{0,-1,\dots,-N+1\}$. If 
\begin{equation}
\label{eq:glimitcondition}
\lim_{M_1\to\infty} \dots \lim_{M_N\to\infty} \sum_{j_1=-M_1}^{t_1}\dots \sum_{j_N=-M_N}^{t_N}  e^{\sum_{l=1}^N j_l\theta_l} \Delta_{(j_1,\dots,j_N)}G
\end{equation}
converges in probability defining an almost surely finite random variable for every $t=(t_1,\dots,t_N)\in\mathbb{Z}^N$, then $G\in\mathcal{G}_{{\Theta}}$.
\end{defi}
\begin{rem}
It turns out that the order of the limits in \eqref{eq:glimitcondition} is irrelevant, and any permutation of the order leads to the convergence towards the same limiting random variable that turns out to be the stationary field $X_t$, cf. proof of Theorem \ref{theo:gchar1}.
\end{rem}
\begin{rem}
Condition $G_t = 0$ for all $t$ such that $\sum_{i=1}^N t_i \in\{0,-1,\dots,-N+1\}$ is rather peculiar and it essentially means that $G$ has to vanish along discrete points of certain $N-1$-dimensional planes. However, this condition is not required \emph{a priori} for the characterisation, but turns out to hold true and is also required to obtain uniqueness of the representation, cf. Theorem \ref{theo:giff} and Remark \ref{rem:peculiar-condition}.
\end{rem}
\begin{rem}
\label{rem:integrability}
Note that if $G\in L^1$ (or $\Delta G\in L^1$), then $G\in \mathcal{G}_{{\Theta}}$ for every $\Theta$. Indeed, this can be seen from
\begin{equation*}
\begin{split}
&\e |\sum_{j_1=-\infty}^{t_1}\dots \sum_{j_N=-\infty}^{t_N}  e^{\sum_{l=1}^N j_l\theta_l} \Delta_{(j_1,\dots,j_N)}G|\\
 &\leq \sum_{j_1=-\infty}^{t_1}\dots \sum_{j_N=-\infty}^{t_N}  e^{\sum_{l=1}^N j_l\theta_l} \e|\Delta_{(j_1,\dots,j_N)}G|\\
 &=\sum_{j_1=-\infty}^{t_1}\dots \sum_{j_N=-\infty}^{t_N}  e^{\sum_{l=1}^N j_l\theta_l} \e|\Delta_{(1,\dots,1)}G|\\
 &\leq \sum_{j_1=-\infty}^{t_1}\dots \sum_{j_N=-\infty}^{t_N}  e^{\sum_{l=1}^N j_l\theta_l} \sum_{(i_1,\dots,i_N)\in\{0,1\}^N}\e|G_{1-i_1,\dots,1-i_N}|.
\end{split}
\end{equation*}
\end{rem}
\begin{ex}
In the two-dimensional case, we have $G\in\mathcal{G}_{{\Theta}}$ for $\Theta = (\theta_1,\theta_2) \in (0,\infty)^2$ provided that $G$ has stationary increments, $G_{t,-t} = G_{t,-t-1}=0$ and 
$$
\lim_{M_1 \to \infty}\lim_{M_2\to\infty} \sum_{j_1=-M_1}^{t_1} \sum_{j_2=-M_2}^{t_2} e^{j_1\theta_1}e^{j_2\theta_2} \Delta_{(j_1,j_2)}G
$$
exists as an almost surely finite random variable. That is, $G$ has stationary increments and $G_{t_1,t_2}$ is set to zero on lines $t_1 = - t_2$ and $t_1 = -t_2-1$. The existence of such fields follow as a by-product of our main results. 
\end{ex}

\subsection{AR(1) type characterisation of stationary fields}
\label{subsec:results}
Our main result is the following characterisation that is a natural extension of the one-dimensional case presented in \cite{voutilainen2017model}.
\begin{theorem}
\label{theo:giff}
Let $\Theta = (\theta_1, \dots, \theta_N)\in (0,\infty)^N$. A field $X=(X_t)_{t\in\mathbb{Z}^N}$ is stationary if and only if the following conditions are satisfied.
\begin{enumerate}[label=(\roman*)]
\item $$\lim_{m\to -\infty} e^{m\theta_j}X_{t_1,\dots,t_{j-1},m,t_{j+1},\dots,t_N} \overset{\p}{\longrightarrow} 0$$
for every $j\in\{1,\dots,N\}$ and $t_1, \dots,t_{j-1},t_{j+1},\dots, t_N \in \mathbb{Z}$.
\item There exists $G=(G_t)_{t\in\mathbb{Z}^N}\in\mathcal{G}_{{\Theta}}$ such that
\begin{equation}
\label{eq:grecursion3}
X_t = \langle\hat{\Theta}, \hat{X}_t^-\rangle + \Delta_t G\quad\text{for every } t\in\mathbb{Z}^N,
\end{equation}
where $\langle \hat{\Theta}, \hat{X}_t^-\rangle$ is given by Definition \ref{defi:inner2}. 
\end{enumerate}
Moreover, for a given $\Theta$, the stationary increment field $G\in\mathcal{G}_{{\Theta}}$ in \eqref{eq:grecursion3} is unique.
\end{theorem}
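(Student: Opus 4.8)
\emph{Algebraic reduction.} The starting point is that the ``lag'' operator in \eqref{eq:grecursion3} factorises. For $j\in\{1,\dots,N\}$ let $(D_jX)_t=X_t-e^{-\theta_j}X_{t-e_j}$, where $e_j\in\mathbb{Z}^N$ is the $j$-th unit vector; these operators commute, and directly from Definition \ref{defi:inner2},
$$
X_t-\langle\hat\Theta,\hat X_t^-\rangle=\sum_{i\in\{0,1\}^N}(-1)^{\sum_l i_l}e^{-\langle i,\Theta\rangle}X_{t-i}=(D_1\cdots D_NX)_t .
$$
Thus \eqref{eq:grecursion3} is equivalent to $\Delta_tG=(D_1\cdots D_NX)_t$. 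Writing $W_t:=e^{\langle t,\Theta\rangle}X_t$ one checks $e^{\langle t,\Theta\rangle}(D_jX)_t=W_t-W_{t-e_j}$, so $e^{\langle t,\Theta\rangle}(D_1\cdots D_NX)_t$ is the backward square increment of $W$ at $t$. Hence, whenever $\Delta_jG=(D_1\cdots D_NX)_j$, the finite sum in \eqref{eq:glimitcondition} telescopes to
$$
\sum_{i\in\{0,1\}^N}(-1)^{\sum_l i_l}W_{a(i)},\qquad a(i)_l=\begin{cases}t_l,&i_l=0,\\ -M_l-1,&i_l=1.\end{cases}
$$
Letting the $M_l$ tend to $\infty$ in any order and invoking condition (i) in each coordinate, with the remaining coordinates frozen at their current values $t_k$ or $-M_k-1$, every corner term with some $i_l=1$ vanishes in probability, so the iterated limit equals $W_t=e^{\langle t,\Theta\rangle}X_t$; in particular the order of the limits is irrelevant.

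\emph{Sufficiency.} Assume (i) and (ii). Since $G\in\mathcal G_\Theta$ the limit in \eqref{eq:glimitcondition} exists in probability, and by the telescoping identity it equals $e^{\langle t,\Theta\rangle}X_t$; thus $X_t=e^{-\langle t,\Theta\rangle}\lim_M\sum_{j}e^{\langle j,\Theta\rangle}\Delta_jG$ over the box $\prod_l[-M_l,t_l]$. Fix $s\in\mathbb Z^N$; substituting $j=k+s$ and relabelling the truncation indices (each still tending to $\infty$) gives $X_{t+s}=e^{-\langle t,\Theta\rangle}\lim_M\sum_{k}e^{\langle k,\Theta\rangle}\Delta_{k+s}G$ over $\prod_l[-M_l,t_l]$. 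For a finite set $F\subset\mathbb Z^N$ and fixed $M$, the vector $\bigl(e^{-\langle t,\Theta\rangle}\sum_k e^{\langle k,\Theta\rangle}\Delta_{k+s}G\bigr)_{t\in F}$ is a fixed linear functional of the increment field $(\Delta_mG)_m$ shifted by $s$, hence has the same law as the corresponding vector built from $(\Delta_mG)_m$ itself, by stationarity of the increments of $G$. Equality in law of random vectors is preserved under convergence in probability, so passing to the iterated limit yields $(X_{t+s})_{t\in F}\overset{\text{law}}{=}(X_t)_{t\in F}$; as $F$ and $s$ are arbitrary, $X$ is stationary.

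\emph{Necessity.} Assume $X$ is stationary. For (i): the family $\{X_{t_1,\dots,m,\dots,t_N}\}_{m\in\mathbb Z}$ consists of random variables all distributed as $X_{\mathbf 0}$, hence is tight, and since $e^{m\theta_j}\to0$ as $m\to-\infty$ we get $\p(e^{m\theta_j}|X_{t_1,\dots,m,\dots,t_N}|>\varepsilon)=\p(|X_{\mathbf 0}|>\varepsilon e^{-m\theta_j})\to0$, which is (i). For (ii), define $\Delta_tG:=X_t-\langle\hat\Theta,\hat X_t^-\rangle=(D_1\cdots D_NX)_t$ and construct $G$ recursively along the ``levels'' $\{t:\sum_i t_i=n\}$: set $G\equiv0$ on the $N$ consecutive levels $n\in\{0,-1,\dots,-N+1\}$, and for every other $t$ use $G_t=\Delta_tG+G_t^-$ when $\sum_i t_i>0$ (so that $G_t^-$ uses only the $N$ levels immediately below), and the analogous identity solved for the corner $i=(1,\dots,1)$ when $\sum_i t_i<-N+1$. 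This recursion is well posed precisely because the prescribed zero data occupies exactly $N$ consecutive levels; the resulting $G$ is a measurable field with square increment $\Delta_tG$ and the required vanishing set by construction. Its increments are stationary because $\Delta_tG$ is a fixed finite linear combination of $(X_{t-i})_{i\in\{0,1\}^N}$ and $X$ is stationary; and $G\in\mathcal G_\Theta$ because, by the telescoping identity together with (i) (just established), the limit in \eqref{eq:glimitcondition} exists and equals $e^{\langle t,\Theta\rangle}X_t$. Finally \eqref{eq:grecursion3} holds by the very definition of $\Delta_tG$.

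\emph{Uniqueness and main obstacle.} If $G,G'\in\mathcal G_\Theta$ both satisfy \eqref{eq:grecursion3}, then $\Delta_t(G-G')=0$ for every $t$ and $G-G'$ vanishes on the $N$ levels $\sum_i t_i\in\{0,\dots,-N+1\}$; the identity $(G-G')_t=\Delta_t(G-G')+(G-G')_t^-=(G-G')_t^-$ then propagates the value $0$ to all levels, so $G=G'$. The routine parts are tightness in (i) and the linear-functional manipulations; the two points requiring care are the bookkeeping in the telescoping identity and its iterated limits — where condition (i) must be applied with a mix of genuine coordinates $t_k$ and truncation coordinates $-M_k-1$ held fixed — and, specific to the multi-parameter setting, the observation that prescribing $G$ on exactly $N$ consecutive hyperplanes $\{\sum_i t_i=n\}$ is what makes the discrete antidifference problem $\Delta G=(D_1\cdots D_NX)$ uniquely solvable, which is simultaneously the source of the peculiar vanishing condition in Definition \ref{defi:class} and of the asserted uniqueness.
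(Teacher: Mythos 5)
Your proof is correct, and in two places it takes a genuinely different route from the paper. For sufficiency, the paper derives the representation $X_t=e^{-\langle t,\Theta\rangle}\sum_{j_1=-\infty}^{t_1}\cdots\sum_{j_N=-\infty}^{t_N}e^{\langle j,\Theta\rangle}\Delta_jG$ by a coordinate-by-coordinate induction, iterating a scalar recursion in each variable; you instead observe that $e^{\langle j,\Theta\rangle}\Delta_jG$ is the rectangular increment of $W_j=e^{\langle j,\Theta\rangle}X_j$ and telescope the box sum to $2^N$ corner terms, killing the non-$t$ corners with condition (i). This is shorter and yields for free the order-independence of the iterated limits, which the paper only asserts in a remark. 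For necessity, the paper constructs $G$ by an explicit nested-sum formula over simplicial index regions (Lemma \ref{lemma:gG}) and must verify $\Delta_tG=e^{-\langle t,\Theta\rangle}\Delta_tY$ through the binomial bookkeeping of Lemmas \ref{lemma:auxiliary} and \ref{lemma:auxiliary2}; you instead solve the antidifference problem $\Delta G=(D_1\cdots D_NX)$ by a level-by-level recursion from the $N$ prescribed zero hyperplanes, upward via $G_t=\Delta_tG+G_t^-$ and downward by solving for the corner $i=(1,\dots,1)$. Your construction is leaner and makes transparent why exactly $N$ consecutive vanishing hyperplanes are the right initial data (well-posedness of the recursion, and hence uniqueness); the paper's closed-form $G$ has the advantage of exhibiting $G$ directly as a functional of the self-similar field $Y=\mathcal{L}_\Theta X$, which is what drives the Lamperti correspondence and the Ornstein--Uhlenbeck examples of Section \ref{subsec:example} --- in your write-up the Lamperti transform appears only implicitly as $W$. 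Two minor points: in the uniqueness paragraph you only display the upward propagation $(G-G')_t=(G-G')_t^-$, so you should state explicitly that levels $\sum_l t_l\le -N$ are handled by the same corner-solving step as in your construction (this is the two-sided induction of Lemma \ref{lemma:guniqueness}); and when you identify the iterated limit with $e^{\langle t,\Theta\rangle}X_t$, it is worth noting that condition (i) is applied with some of the frozen coordinates equal to truncation values $-M_k-1$, which is legitimate since (i) is assumed for all fixed values of the remaining coordinates.
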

As a direct corollary we obtain the following version in a two-dimensional case.
\begin{kor}
Let $\Theta = (\theta_1, \theta_2)\in (0,\infty)^2$. A field $X=(X_t)_{t\in\mathbb{Z}^2}$ is stationary if and only if the following conditions are satisfied.
\begin{enumerate}[label=(\roman*)]
\item $$\lim_{m\to -\infty} e^{m\theta_2}X_{t_1,m} \overset{\p}{\longrightarrow} 0 \quad\text{and}\quad \lim_{m\to -\infty} e^{m\theta_1}X_{m,t_2} \overset{\p}{\longrightarrow} 0$$
for every $t_1$ and $t_2$.
\item There exists $G=(G_t)_{t\in\mathbb{Z}^2}\in\mathcal{G}_{{\Theta}}$ such that
\begin{equation}
\label{eq:recursion3}
X_t = \langle\hat{\Theta}, \hat{X}_t^-\rangle + \Delta_t G\quad\text{for every } t\in\mathbb{Z}^2,
\end{equation}
where $ \langle\hat{\Theta}, \hat{X}_t^-\rangle$ is given by \eqref{eq:innerproduct-2d}.
\end{enumerate}
The stationary increment field $G\in\mathcal{G}_{{\Theta}}$ in \eqref{eq:recursion3} is unique.
%\begin{proof}
%This follows by combining Theorems \ref{theo:char1} and \ref{theo:char2} together with Lemma \ref{lemma:uniqueness}.
%\end{proof}
\end{kor}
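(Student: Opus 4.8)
The plan is to read off the statement as the special case $N=2$ of Theorem \ref{theo:giff}; the only work is to verify that each ingredient of the general theorem unwinds to the two-dimensional formulation stated here. First I would note that for $N=2$ the nonzero multi-indices $(i_1,i_2)\in\{0,1\}^2$ are $(1,0),(0,1),(1,1)$, carrying signs $(-1)^{1+\sum_l i_l}$ equal to $+,+,-$ and weights $e^{-\langle i,\Theta\rangle}$ equal to $e^{-\theta_1},e^{-\theta_2},e^{-\theta_1-\theta_2}$, so the vectors $\hat{\Theta},\hat{X}^-_t$ of Definition \ref{defi:inner2} and their inner product are exactly those displayed before the corollary, i.e. the right-hand side of \eqref{eq:innerproduct-2d}. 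Combined with the explicit form of $\Delta_t X$ for $N=2$ recorded after the increment definition, the recursion \eqref{eq:grecursion3} becomes \eqref{eq:recursion3}.

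Next I would translate condition (i): applying Theorem \ref{theo:giff}(i) with $j=1$ and with $j=2$ yields exactly $\lim_{m\to-\infty} e^{m\theta_1}X_{m,t_2} \overset{\p}{\longrightarrow} 0$ for every $t_2$ and $\lim_{m\to-\infty} e^{m\theta_2}X_{t_1,m} \overset{\p}{\longrightarrow} 0$ for every $t_1$. For $G\in\mathcal{G}_{\Theta}$ (Definition \ref{defi:class}) with $N=2$, the vanishing requirement $\sum_i t_i\in\{0,-1,\dots,-N+1\}$ reduces to $t_1+t_2\in\{0,-1\}$, that is, $G$ vanishes on the lines $t_1=-t_2$ and $t_1=-t_2-1$, and the convergence condition \eqref{eq:glimitcondition} reduces to the stated iterated-limit existence of $\sum_{j_1=-M_1}^{t_1}\sum_{j_2=-M_2}^{t_2} e^{j_1\theta_1}e^{j_2\theta_2}\Delta_{(j_1,j_2)}G$ as an almost surely finite random variable, matching the example following Remark \ref{rem:integrability}.

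With these identifications in place, the equivalence ``$X$ stationary $\iff$ (i) and (ii)'' and the uniqueness of $G\in\mathcal{G}_{\Theta}$ follow verbatim from Theorem \ref{theo:giff}. I do not expect any genuine obstacle: the proof is a one-line appeal to Theorem \ref{theo:giff}, and the only point requiring care is the bookkeeping of signs and exponential weights in the three-term inner product together with the correct reading of the plane condition with $N-1=1$.
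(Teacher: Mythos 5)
Your proposal is correct and matches the paper exactly: the corollary is stated there as a direct specialisation of Theorem \ref{theo:giff} to $N=2$, with no separate proof beyond the bookkeeping you carry out. The identifications of the three-term inner product, the two limit conditions, and the reduction of the vanishing set to the lines $t_1=-t_2$ and $t_1=-t_2-1$ are all as intended.
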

The proof of Theorem \ref{theo:giff} is split into a series of lemmas and auxiliary theorems. We begin with the following result that is a version of Lamperti theorem. 
\begin{theorem}
\label{thm:lamperti}
If $X = (X_t)_{t\in\mathbb{Z}^N}$ is stationary, then $(\mathcal{L}_\Theta X)_{e^t}$
is $\Theta$-self-similar. Conversely, if $Y = (Y_{e^t})_{t\in\mathbb{Z}^N}$ is $\Theta$-self-similar, then $(\mathcal{L}^{-1}_\Theta Y)_t$ is stationary.
\end{theorem}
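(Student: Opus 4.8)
The plan is to verify the two implications directly from the definitions, reducing everything to the defining group action on finite-dimensional distributions. The key observation is that the Lamperti transformation in Definition \ref{defi:lamperti} is, at the level of index sets, just a relabeling of $\mathbb{Z}^N$ by the bijection $t \mapsto e^t$ together with a deterministic (nonrandom, nonzero) multiplicative rescaling $X_t \mapsto e^{\langle t,\Theta\rangle}X_t$ of each coordinate. Since equality in law in the sense of finite-dimensional distributions is preserved under applying a fixed deterministic function coordinatewise to a finite collection of random variables, the whole statement should follow by unwinding notation; the only genuine content is bookkeeping with the inner product $\langle \cdot,\Theta\rangle$ and the additivity $e^{\langle t+s,\Theta\rangle} = e^{\langle t,\Theta\rangle}e^{\langle s,\Theta\rangle}$.

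First I would prove the forward direction. Assume $X=(X_t)_{t\in\mathbb{Z}^N}$ is stationary and set $Y_{e^t} := (\mathcal{L}_\Theta X)_{e^t} = e^{\langle t,\Theta\rangle}X_t$. Fix $s\in\mathbb{Z}^N$ and a finite set of indices $t^{(1)},\dots,t^{(k)}\in\mathbb{Z}^N$. Then
\begin{equation*}
\bigl(Y_{e^{t^{(j)}+s}}\bigr)_{j=1}^k = \bigl(e^{\langle t^{(j)}+s,\Theta\rangle}X_{t^{(j)}+s}\bigr)_{j=1}^k = \bigl(e^{\langle s,\Theta\rangle}e^{\langle t^{(j)},\Theta\rangle}X_{t^{(j)}+s}\bigr)_{j=1}^k.
\end{equation*}
By stationarity of $X$, the vector $(X_{t^{(j)}+s})_{j=1}^k$ has the same law as $(X_{t^{(j)}})_{j=1}^k$, and since multiplying the $j$-th coordinate by the deterministic constant $e^{\langle s,\Theta\rangle}e^{\langle t^{(j)},\Theta\rangle}$ is a fixed continuous map, the vector above has the same law as $\bigl(e^{\langle s,\Theta\rangle}e^{\langle t^{(j)},\Theta\rangle}X_{t^{(j)}}\bigr)_{j=1}^k = \bigl(e^{\langle s,\Theta\rangle}Y_{e^{t^{(j)}}}\bigr)_{j=1}^k$. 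As $s$ and the indices were arbitrary, this is exactly the defining property in Definition \ref{defi:self-similarity}, so $Y$ is $\Theta$-self-similar.

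For the converse I would run the same computation backwards. Assume $Y=(Y_{e^t})_{t\in\mathbb{Z}^N}$ is $\Theta$-self-similar and set $X_t := (\mathcal{L}^{-1}_\Theta Y)_t = e^{-\langle t,\Theta\rangle}Y_{e^t}$; note first that $\mathcal{L}^{-1}_\Theta$ genuinely inverts $\mathcal{L}_\Theta$, so there is no ambiguity. Fix $s$ and finitely many indices $t^{(1)},\dots,t^{(k)}$. Then $(X_{t^{(j)}+s})_j = (e^{-\langle t^{(j)}+s,\Theta\rangle}Y_{e^{t^{(j)}+s}})_j$, and by $\Theta$-self-similarity $(Y_{e^{t^{(j)}+s}})_j \overset{\text{law}}{=} (e^{\langle s,\Theta\rangle}Y_{e^{t^{(j)}}})_j$; multiplying coordinatewise by the deterministic constants $e^{-\langle t^{(j)}+s,\Theta\rangle}$ and simplifying $e^{-\langle t^{(j)}+s,\Theta\rangle}e^{\langle s,\Theta\rangle} = e^{-\langle t^{(j)},\Theta\rangle}$ gives $(X_{t^{(j)}+s})_j \overset{\text{law}}{=} (e^{-\langle t^{(j)},\Theta\rangle}Y_{e^{t^{(j)}}})_j = (X_{t^{(j)}})_j$, which is stationarity of $X$. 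I do not expect a real obstacle here: the only things to be careful about are (a) that $t\mapsto e^t = (e^{t_1},\dots,e^{t_N})$ is injective on $\mathbb{Z}^N$ so the fields are genuinely well-defined and the two transforms are mutually inverse, and (b) that equality of finite-dimensional distributions is stable under applying fixed measurable maps coordinatewise, which is standard. Hence the "hard part" is essentially just keeping the multi-index exponents straight.
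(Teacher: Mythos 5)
Your proposal is correct and follows essentially the same route as the paper's proof: write out an arbitrary finite-dimensional vector, use the additivity $e^{\langle t+s,\Theta\rangle}=e^{\langle s,\Theta\rangle}e^{\langle t,\Theta\rangle}$, and invoke stationarity (resp.\ self-similarity) together with the fact that coordinatewise multiplication by deterministic constants preserves equality in law. The paper's proof is just a terser version of the same computation.
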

\begin{proof}
First, assume that $X$ is stationary. Set $Y_{e^t} = (\mathcal{L}_\Theta X)_{e^t}$ and let $n \in \mathbb{N}$. Now
\begin{equation*}
\begin{split}
(Y_{e^{t_1+s}}, \dots, Y_{e^{t_n+s}}) &= (e^{\langle t_1+s,\Theta\rangle} X_{t_1+s}, \dots e^{\langle t_n+s,\Theta\rangle}X_{t_n+s})\\
&\overset{\text{law}}{=} (e^{\langle s,\Theta\rangle} e^{\langle t_1, \Theta\rangle} X_{t_1}, \dots, e^{\langle s,\Theta\rangle} e^{\langle t_n,\Theta\rangle} X_{t_n})\\
&= (e^{\langle s,\Theta\rangle} Y_{e^{t_1}}, \dots, e^{\langle s,\Theta\rangle} Y_{e^{t_n}} ),
\end{split}
\end{equation*}
proving the first part of the claim. Next, assume that $Y$ is $\Theta$-self-similar. Set $X_t = (\mathcal{L}^{-1}_\Theta Y)_t$ and let $n\in\mathbb{N}$. Now
\begin{equation*}
\begin{split}
(X_{t_1+s}, \dots, X_{t_n+s}) &= ( e^{-\langle t_1+s,\Theta\rangle} Y_{e^{t_1+s}}, \dots, e^{-\langle t_n+s,\Theta\rangle} Y_{e^{t_n+s}})\\
& \overset{\text{law}}{=} ( e^{-\langle t_1,\Theta\rangle}Y_{e^{t_1}}, \dots, e^{-\langle t_n,\Theta\rangle} Y_{e^{t_n}})\\
&= (X_{t_1}, \dots, X_{t_n}),
\end{split}
\end{equation*}
completing the proof.
\end{proof}
The following lemma provides one of our key observations.
\begin{lemma}
\label{lemma:gG}
Let $(Y_{e^t})_{t\in\mathbb{Z}^N}$ be $\Theta$-self-similar. Set 
\begin{equation*}
\Delta_t Y = \sum_{(i_1,\dots,i_N)\in\{0,1\}^N} (-1)^{\sum_{l=1}^N i_l} Y_{e^{t_1-i_1},\dots,e^{t_N-i_N}}.
\end{equation*}
For $\sum_{l=1}^N t_l \geq 1$, we set 
$$
G_t = \sum_{k_1= 1-t_2-\dots-t_N}^{t_1} \sum_{k_2=1-k_1-t_3-\dots-t_N}^{t_2} \dots \sum_{k_N = 1-k_1-\dots-k_{N-1}}^{t_N} e^{-\langle k,\Theta\rangle} \Delta_k Y,
$$
and, for $\sum_{l=1}^N t_l \leq 0$, we set 
$$
G_t = (-1)^N\sum_{k_1=t_1+1}^{-t_2-\dots-t_N-N+1} \sum_{k_2=t_2+1}^{-k_1-t_3-\dots-t_N-N+2}\dots \sum_{k_N = t_N+1}^{-k_1-\dots-k_{N-1}} e^{-\langle k, \Theta\rangle} \Delta_k Y.
$$
Here $\langle k,\Theta\rangle$ is the standard inner product and sums of the type $\sum_{s_2}^{s_1}$ with $s_1<s_2$ are interpreted as empty sums. Now 
\begin{enumerate}[label=(\roman*)]
\item $G_t = 0$ for all $t$ such that $\sum_{l=1}^N t_l \in\{0,-1,\dots,-N+1\}$,
\item $\Delta_t G = e^{-\langle t,\Theta\rangle} \Delta_t Y$ for every $t\in\mathbb{Z}^N$,
\item $G = (G_t)_{t\in\mathbb{Z}^N}$ is a stationary increment field.
\end{enumerate}
\end{lemma}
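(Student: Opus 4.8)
The plan is to first put $G_t$ into an explicit closed form as a \emph{finite} linear combination of the quantities $w_k:=e^{-\langle k,\Theta\rangle}\Delta_k Y$, and then read off (i)--(iii) from it. Writing $\sigma(u):=\sum_{l=1}^N u_l$, the first step is to check that the two nested sums defining $G_t$ are exactly
\[
G_t=\sum_{k\in R^+(t)}w_k\quad(\sigma(t)\ge 1),\qquad G_t=(-1)^N\sum_{k\in R^-(t)}w_k\quad(\sigma(t)\le 0),
\]
where $R^+(t):=\{k\in\mathbb{Z}^N:k_l\le t_l\ \forall l,\ \sigma(k)\ge 1\}$ and $R^-(t):=\{k\in\mathbb{Z}^N:k_l\ge t_l+1\ \forall l,\ \sigma(k)\le 0\}$. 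This is pure bookkeeping: the upper endpoints of the summation intervals in the first formula contribute the constraints $k_l\le t_l$ for all $l$, the lower endpoints in the second formula contribute $k_l\ge t_l+1$ for all $l$, the innermost endpoint contributes $\sigma(k)\ge 1$, resp. $\sigma(k)\le 0$, and every remaining endpoint imposes only a constraint already implied by these — e.g. in the first formula $k_1+\dots+k_j+t_{j+1}+\dots+t_N\ge k_1+\dots+k_N=\sigma(k)\ge 1$ because $k_l\le t_l$. Both sets $R^\pm(t)$ are finite, so $G_t$ is a finite sum and well defined. Claim (i) is then immediate: if $-N+1\le\sigma(t)\le 0$, any $k$ with $k_l\ge t_l+1$ for all $l$ satisfies $\sigma(k)\ge\sigma(t)+N\ge 1$, so $R^-(t)=\emptyset$ and $G_t=0$.

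For (ii) I would compute $\Delta_t G=\sum_{i\in\{0,1\}^N}(-1)^{|i|}G_{t-i}$ (with $|i|:=\sum_l i_l$) by substituting the closed form and interchanging the finite sums. If $\sigma(t)\le 0$, then $\sigma(t-i)=\sigma(t)-|i|\le 0$ for all $i$, so every $G_{t-i}$ is given by the second formula, and the inner alternating sum factorises as
\[
\sum_{i\in\{0,1\}^N}(-1)^{|i|}\prod_{l=1}^N\mathbbm{1}_{\{k_l\ge t_l-i_l+1\}}=\prod_{l=1}^N\bigl(\mathbbm{1}_{\{k_l\ge t_l+1\}}-\mathbbm{1}_{\{k_l\ge t_l\}}\bigr)=(-1)^N\mathbbm{1}_{\{k=t\}},
\]
leaving $\Delta_t G=w_t$. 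If $\sigma(t)\ge 1$, the shifts split according to $|i|$: for $|i|\le\sigma(t)-1$ one has $\sigma(t-i)\ge 1$ and uses the first formula, while for $|i|\ge\sigma(t)$ one has $-N+1\le\sigma(t-i)\le 0$ (as $|i|\le N$), hence $G_{t-i}=0$ by (i). Thus $\Delta_t G=\sum_k w_k\,\mathbbm{1}_{\{\sigma(k)\ge 1\}}\,S(k)$ with $S(k):=\sum_{|i|\le\sigma(t)-1}(-1)^{|i|}\prod_l\mathbbm{1}_{\{k_l\le t_l-i_l\}}$. Since the full alternating sum over all $i$ equals $\prod_l(\mathbbm{1}_{\{k_l\le t_l\}}-\mathbbm{1}_{\{k_l\le t_l-1\}})=\mathbbm{1}_{\{k=t\}}$, it remains to show that the missing tail $\sum_{|i|\ge\sigma(t)}(-1)^{|i|}\prod_l\mathbbm{1}_{\{k_l\le t_l-i_l\}}$ vanishes for every $k$ with $\sigma(k)\ge 1$. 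Here the combinatorics enters: the tail is $0$ when $k\not\le t$; when $k\le t$ the product is nonzero precisely for the $i$ supported on $P:=\{l:k_l<t_l\}$, so the tail equals $\sum_{j\ge\sigma(t)}(-1)^j\binom{|P|}{j}$, and this vanishes because $|P|\le\sum_l(t_l-k_l)=\sigma(t)-\sigma(k)\le\sigma(t)-1$ (the sum being empty when $k=t$). Consequently $\Delta_t G=\sum_k w_k\mathbbm{1}_{\{k=t\}}=w_t=e^{-\langle t,\Theta\rangle}\Delta_t Y$, which is (ii).

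Finally, (iii) is a short consequence of (ii): $\Delta_t Y$ is a fixed deterministic linear combination of the values $Y_{e^{t-i}}$, $i\in\{0,1\}^N$, so the $\Theta$-self-similarity of $Y$ gives $(\Delta_{t+s}Y)_{t\in\mathbb{Z}^N}\overset{\text{law}}{=}(e^{\langle s,\Theta\rangle}\Delta_t Y)_{t\in\mathbb{Z}^N}$ for every $s$, and multiplying by the deterministic weights $e^{-\langle t+s,\Theta\rangle}$ turns this into $(\Delta_{t+s}G)_{t\in\mathbb{Z}^N}\overset{\text{law}}{=}(\Delta_t G)_{t\in\mathbb{Z}^N}$. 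The one genuinely delicate point is the regime $1\le\sigma(t)\le N$ in the proof of (ii): there the cube of shifts $\{t-i:i\in\{0,1\}^N\}$ straddles the two defining formulas for $G$ and lands partly on the null planes $\sigma\in\{0,\dots,-N+1\}$, and verifying that the alternating sum over $\{0,1\}^N$ nonetheless collapses to the single term $w_t$ is exactly what brings in the elementary identity $\sum_{j\ge m}(-1)^j\binom{p}{j}=0$ for $p<m$ together with the bound $|P|\le\sigma(t)-\sigma(k)$ above.
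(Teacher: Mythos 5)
Your proof is correct. Its skeleton coincides with the paper's: your identification of the nested sums with $\sum_{k\in R^+(t)}w_k$ and $(-1)^N\sum_{k\in R^-(t)}w_k$ is exactly the paper's Lemma \ref{lemma:auxiliary2}, item (i) follows in both cases from emptiness of the index set on the planes $\sigma(t)\in\{0,\dots,-N+1\}$, and item (iii) is obtained from item (ii) by the identical self-similarity computation. Where you genuinely diverge is the cancellation argument in item (ii). The paper fixes a term $e^{-\langle j,\Theta\rangle}\Delta_j Y$, counts in how many of the $2^N$ summands $G_{t-i}$ it occurs with each sign, and closes the count with a separate binomial lemma ($\sum_m\binom{M}{2m}=\sum_m\binom{M}{2m+1}=2^{M-1}$, Lemma \ref{lemma:auxiliary}) plus a case split on the parity of $N-m$. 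You instead factorise the alternating sum of indicators into the product $\prod_l\bigl(\mathbbm{1}_{\{k_l\le t_l\}}-\mathbbm{1}_{\{k_l\le t_l-1\}}\bigr)=\mathbbm{1}_{\{k=t\}}$ (respectively $(-1)^N\mathbbm{1}_{\{k=t\}}$ in the lower regime) and handle the only delicate range $1\le\sigma(t)\le N$, where the cube $\{t-i\}$ straddles the two defining formulas, by showing the missing tail is $\sum_{j\ge\sigma(t)}(-1)^j\binom{|P|}{j}$ with $|P|\le\sigma(t)-\sigma(k)\le\sigma(t)-1$, which vanishes because every binomial coefficient is out of range. This buys a noticeably shorter and less case-heavy argument — no parity distinction, no auxiliary identity beyond $\binom{p}{j}=0$ for $j>p$ — while proving exactly the same statement; the paper's term-counting version makes the sign bookkeeping more explicit but at the cost of the extra lemma and the even/odd case analysis.
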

\begin{rem}
It turns out that $G$ defined as above satisfies $G \in \mathcal{G}_{\Theta}$, see also Lemma \ref{lemma:GinG} below. 
\end{rem}
\begin{ex}
\label{lemma:G}
In the two-dimensional case, for $\Theta$-self-similar $(Y_{e^t})_{t\in\mathbb{Z}^2}$, we denote 
$$\Delta_t Y = Y_{e^{t_1}, e^{t_2}}-Y_{e^{t_1-1}, e^{t_2}}-Y_{e^{t_1}, e^{t_2-1}}+Y_{e^{t_1-1}, e^{t_2-1}}.$$
The field $G = (G_t)_{t\in\mathbb{Z}^2}$ defined as
\begin{equation*}
G_{t_1,t_2} = \begin{cases}
\sum_{k_1= 1-t_2}^{t_1} \sum_{k_2=1-k_1}^{t_2} e^{- \langle(k_1, k_2), \Theta\rangle} \Delta_k Y,& \quad t_1+t_2 \geq 1\\
\sum_{k_1=t_1+1}^{-t_2-1} \sum_{k_2=t_2+1}^{-k_1} e^{-\langle( k_1, k_2), \Theta\rangle} \Delta_k Y,& \quad t_1+t_2 \leq0,
\end{cases}
\end{equation*}
belongs to the class $\mathcal{G}_\Theta$. Here sums of the type $\sum_{s+1}^s$ are interpreted as empty sums. 
\end{ex}
The proof of Lemma \ref{lemma:gG} is based on the following additional lemmas that we prove first. The first one provides an auxiliary result on sums of binomial coefficients. Although the result is quite elementary, we provide a proof for the reader's convenience.
\begin{lemma}
\label{lemma:auxiliary}
We have the following identities:
\begin{equation*}
\sum_{m=0}^{\frac{M-1}{2}} \binom{M}{2m} = 2^{M-1}, \quad\sum_{m=0}^{\frac{M-1}{2}} \binom{M}{2m+1} = 2^{M-1}, \qquad\text{when $M\geq 1$ is odd}.
\end{equation*}
\begin{equation*}
\sum_{m=0}^{\frac{M}{2}} \binom{M}{2m} = 2^{M-1}, \quad\sum_{m=0}^{\frac{M}{2}-1} \binom{M}{2m+1} = 2^{M-1}, \qquad\text{when $M\geq 2$ is even}.
\end{equation*}
\end{lemma}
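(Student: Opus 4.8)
The plan is to prove the four binomial identities by a single unified argument based on the classical expansion of $(1+x)^M$ evaluated at $x=1$ and $x=-1$, together with a parity bookkeeping step. First I would recall the two elementary facts
\[
\sum_{k=0}^{M} \binom{M}{k} = 2^M, \qquad \sum_{k=0}^{M} (-1)^k \binom{M}{k} = 0,
\]
the first coming from the binomial theorem at $x=1$ and the second at $x=-1$ (valid for every $M \geq 1$). Adding these two identities annihilates all the odd-index terms and doubles the even-index ones, giving $\sum_{k \text{ even}} \binom{M}{k} = 2^{M-1}$; subtracting them gives $\sum_{k \text{ odd}} \binom{M}{k} = 2^{M-1}$. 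This already contains all the content of the lemma.

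The remaining work is purely notational: I would rewrite the two sums $\sum_{k \text{ even}}$ and $\sum_{k \text{ odd}}$ in the reindexed form appearing in the statement, splitting into the cases $M$ odd and $M$ even. When $M$ is odd, the even indices $k \in \{0,2,\dots,M-1\}$ are exactly $k = 2m$ for $m = 0, \dots, \frac{M-1}{2}$, and the odd indices $k \in \{1,3,\dots,M\}$ are exactly $k = 2m+1$ for $m = 0, \dots, \frac{M-1}{2}$, which yields the first display of the lemma. When $M$ is even, the even indices $k \in \{0,2,\dots,M\}$ are $k = 2m$ for $m = 0, \dots, \frac{M}{2}$, and the odd indices $k \in \{1,3,\dots,M-1\}$ are $k = 2m+1$ for $m = 0, \dots, \frac{M}{2}-1$, which yields the second display. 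One should briefly note that the upper summation limits written as fractions in the statement are to be read as the corresponding integer ranges, so no rounding ambiguity arises.

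There is essentially no obstacle here: the identities are standard and the proof is a two-line generating-function computation followed by case-splitting on the parity of $M$. The only thing to be slightly careful about is making sure the index ranges after reindexing match the claimed endpoints exactly in both parity cases, and that the degenerate small cases ($M=1$ and $M=2$) are covered by the stated hypotheses $M \geq 1$ and $M \geq 2$ respectively. I would present the argument compactly, deriving the even-sum and odd-sum identities once and then invoking them in each of the four displayed formulas.
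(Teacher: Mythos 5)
Your proof is correct, but it takes a different (and arguably cleaner) route than the paper. You derive both the even-index and odd-index sums simultaneously from the two evaluations $(1+1)^M=2^M$ and $(1-1)^M=0$: adding kills the odd terms and subtracting kills the even ones, giving $\sum_{k\ \mathrm{even}}\binom{M}{k}=\sum_{k\ \mathrm{odd}}\binom{M}{k}=2^{M-1}$ in one stroke, uniformly in the parity of $M$; the case split on $M$ odd versus even is then purely a matter of matching index ranges, which you handle correctly. The paper instead avoids the alternating sum entirely: in the odd case it observes that the even-index and odd-index sums are equal by the symmetry $\binom{M}{k}=\binom{M}{M-k}$ (which pairs even indices with odd ones precisely because $M$ is odd) and sum to $2^M$; in the even case, where that symmetry argument breaks down, it evaluates the odd-index sum directly via Pascal's rule, telescoping $\binom{M}{1}+\cdots+\binom{M}{M-1}$ into the full row sum $\binom{M-1}{0}+\cdots+\binom{M-1}{M-1}=2^{M-1}$, and then deduces the even-index sum from the total. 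Your argument is more unified and avoids the two separate mechanisms; the paper's is marginally more self-contained in that it only ever uses the positive-coefficient binomial theorem and Pascal's identity. Both are complete proofs of the same elementary fact.
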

\begin{proof}
In the odd case
\begin{equation*}
\sum_{m=0}^{\frac{M-1}{2}} \binom{M}{2m} = \binom{M}{0}+\ldots+\binom{M}{M-1}\quad\text{and}\quad \sum_{m=0}^{\frac{M-1}{2}} \binom{M}{2m+1} = \binom{M}{1}+\ldots+\binom{M}{M}.
\end{equation*}
These sums are equal since $\binom{M}{k} = \binom{M}{M-k}$. In addition,
\begin{equation*}
\label{binomialsum}
\sum_{m=0}^{\frac{M-1}{2}} \binom{M}{2m}+\sum_{m=0}^{\frac{M-1}{2}} \binom{M}{2m+1} = 2^M
\end{equation*}
completing the proof of the first case. For the even case, we obtain
\begin{equation*}
\begin{split}
\sum_{m=0}^{\frac{M}{2}-1} \binom{M}{2m+1}&= \binom{M}{1}+\ldots+\binom{M}{M-1}\\
 &= \binom{M-1}{0} + \binom{M-1}{1}+\dots+\binom{M-1}{M-2}+\binom{M-1}{M-1} = 2^{M-1}.
\end{split}
\end{equation*}
Observing that 
\begin{equation*}
\sum_{m=0}^{\frac{M}{2}} \binom{M}{2m}  + \sum_{m=0}^{\frac{M}{2}-1} \binom{M}{2m+1} = 2^M
\end{equation*}
completes the proof.
\end{proof}
The following lemma sheds light on how the field $G$ of Lemma \ref{lemma:gG} is constructed from a self-similar field $Y$.
\begin{lemma}
\label{lemma:auxiliary2}
\begin{enumerate}[label=(\roman*)]
\item Let $\sum_{l=1}^N t_l \geq 1$. Then a term $e^{-\langle j,\Theta\rangle} \Delta_j Y$ belongs to the sum defining $G_t$ in Lemma \ref{lemma:gG} if and only if
$$j_l \leq t_l\quad\text{for every $l$ and}\quad \sum_{l=1}^N j_l \geq 1.$$
\item Let $\sum_{l=1}^N t_l \leq -N$. Then a term $e^{-\langle j,\Theta\rangle} \Delta_j Y$ belongs to the sum defining $G_t$ in Lemma \ref{lemma:gG} if and only if
$$j_l \geq t_l+1 \quad\text{for every $l$ and}\quad \sum_{l=1}^N j_l \leq 0.$$
\end{enumerate}
\end{lemma}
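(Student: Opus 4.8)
The plan is to prove Lemma~\ref{lemma:auxiliary2} by unwinding the nested-sum definition of $G_t$ from Lemma~\ref{lemma:gG} and checking the two inclusion directions separately. The key observation is that in the case $\sum_{l=1}^N t_l \geq 1$, the summation limits are
$$
1-t_2-\dots-t_N \leq k_1 \leq t_1,\quad 1-k_1-t_3-\dots-t_N \leq k_2 \leq t_2,\quad\dots,\quad 1-k_1-\dots-k_{N-1}\leq k_N \leq t_N,
$$
and the point is that the upper limits already force $k_l \leq t_l$ for every $l$, while the lower limits, read together, are designed to encode exactly the single constraint $\sum_{l=1}^N k_l \geq 1$.

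First I would handle the ''only if'' direction. Suppose $e^{-\langle j,\Theta\rangle}\Delta_j Y$ appears in the sum defining $G_t$, i.e. $j = (j_1,\dots,j_N)$ satisfies all the nested inequalities. The upper bounds give $j_l \leq t_l$ for each $l$ immediately. For the lower bound, I would use the last inequality $j_N \geq 1 - j_1 - \dots - j_{N-1}$, which rearranges to $\sum_{l=1}^N j_l \geq 1$. Conversely, for the ''if'' direction, assume $j_l \leq t_l$ for all $l$ and $\sum_{l=1}^N j_l \geq 1$; I must check that $j$ satisfies every one of the nested lower bounds, namely $j_m \geq 1 - j_1 - \dots - j_{m-1} - t_{m+1} - \dots - t_N$ for each $m = 1,\dots,N$ (with the convention that absent terms are zero). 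The crucial step here is to rewrite this as $\sum_{l=1}^m j_l \geq 1 - \sum_{l=m+1}^N t_l$, i.e. $\sum_{l=1}^m j_l + \sum_{l=m+1}^N t_l \geq 1$; since $j_l \leq t_l$ for the indices $l > m$ we have $\sum_{l=m+1}^N t_l \geq \sum_{l=m+1}^N j_l$, hence $\sum_{l=1}^m j_l + \sum_{l=m+1}^N t_l \geq \sum_{l=1}^N j_l \geq 1$, which is exactly what is needed. The case $\sum_{l=1}^N t_l \leq -N$ in part (ii) is entirely symmetric: the summation runs from $k_l = t_l + 1$ up to limits of the form $-k_1 - \dots - k_{l-1} - t_{l+1} - \dots - t_N - N + l$, so the lower bounds give $j_l \geq t_l + 1$ directly, the last upper bound gives $\sum_{l=1}^N j_l \leq 0$, and for the converse one reverses the inequalities using $j_l \geq t_l+1$ for $l > m$.

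The main obstacle, such as it is, is purely bookkeeping: one has to be careful that the nested lower (respectively upper) limits telescope correctly and that the ''worst'' constraint among the $N$ nested inequalities is always the last one, with all earlier ones following automatically from $j_l \leq t_l$ (respectively $j_l \geq t_l + 1$) on the remaining coordinates. Once the reformulation $\sum_{l=1}^m j_l + \sum_{l=m+1}^N t_l \geq 1$ is in hand, each step is a one-line monotonicity argument, so no genuine difficulty arises. I would also note explicitly that the hypotheses $\sum t_l \geq 1$ and $\sum t_l \leq -N$ guarantee the relevant outermost summation ranges are nonempty (otherwise the stated equivalence would be vacuous on one side), which is why part (ii) is phrased with $\leq -N$ rather than $\leq 0$; the intermediate cases $\sum t_l \in \{0,-1,\dots,-N+1\}$ are precisely the ones where $G_t = 0$ by part (i) of Lemma~\ref{lemma:gG}.
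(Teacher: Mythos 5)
Your proposal is correct and follows essentially the same route as the paper: the upper (resp.\ lower) limits of the nested sums give $j_l\leq t_l$ (resp.\ $j_l\geq t_l+1$) immediately, the innermost limit encodes $\sum_l j_l\geq 1$ (resp.\ $\leq 0$), and the remaining intermediate limits are verified by the same monotonicity estimate $\sum_{l\leq m}j_l+\sum_{l>m}t_l\geq\sum_l j_l\geq 1$ (and its reversed analogue with $t_l\leq j_l-1$ absorbing the $-N+l$ shifts). Your explicit separation of the two implications and the closing remark on why the ranges $\sum_l t_l\in\{0,\dots,-N+1\}$ are excluded are harmless refinements of the paper's more compressed write-up.
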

\begin{proof}
Item $(i)$: By the upper bounds in the sum defining $G_t$, it is clear that we have $j_l \leq t_l$ for all $l$. By the lower bound of the inmost summation, we obtain 
$j_N \geq 1- j_1-\dots-j_{N-1}$. That is, $\sum_{l=1}^N j_l \geq 1.$
We also observe that the other lower bounds of the sum defining $G_t$ yield conditions 
$$j_{N-h} \geq 1- \sum_{l=1}^{N-h-1} j_l - \sum_{l=N-h+1}^N t_l\quad \text{for every }h\in\{0, \dots, N-1\}.$$
These conditions are satisfied since
\begin{equation*}
\sum_{l=1}^{N-h} j_l + \sum_{N-h+1}^N t_l \geq \sum_{l=1}^{N} j_l \geq 1.
\end{equation*}
This completes the proof of the first item.\\
\newline
Item $(ii)$: By the lower bounds in the sum defining $G_t$, it is clear that we have $j_l \geq t_l+1$ for all $l$. By the upper bound of the inmost summation, we obtain also that
$
j_N \leq - j_1-\dots-j_{N-1}$. That is, $\sum_{l=1}^N j_l \leq 0.$
We also observe that the other upper bounds of the sum defining $G_t$ yield conditions 
$$j_{N-h} \leq - \sum_{l=1}^{N-h-1} j_l - \sum_{l=N-h+1}^N t_l - N + (N-h)\quad \text{for every }h\in\{0, \dots, N-1\}.$$
These conditions are satisfied since
\begin{equation*}
\begin{split}
\sum_{l=1}^{N-h} j_l + \sum_{N-h+1}^N t_l &\leq \sum_{l=1}^{N-h} j_l  + \sum_{N-h+1}^N (j_l -1) =  \sum_{l=1}^{N} j_l - h \leq -h.
\end{split}
\end{equation*}
This completes the proof of the second item, and thus the whole proof is completed.
\end{proof}
\begin{proof}[Proof of Lemma \ref{lemma:gG}]
Item $(i)$: Let $\sum_{l=1}^N t_l \in\{0,-1,\dots,-N+1\}$. Then 
$$\sum_{l=2}^N t_l \in\{-t_1,-1-t_1,\dots,-N+1-t_1\}$$
and for the upper bound of the first summation in the definition of $G_t$ it holds that
$$-\sum_{l=2}^N t_l - N +1 \in\{t_1-N+1,t_1-N+2,\dots,t_1\}.$$
Hence, $G_t$ is given by an empty sum.\\
\newline
Item $(ii)$: Recall that  $G_t = 0$ for all $t$ such that $\sum_{i=1}^N t_i \in\{0,-1,\dots,-N+1\}$. 

We treat the case $\sum_{l=1}^N t_l \geq 1$ first. Let $M$ be such that $\sum_{l=1}^N t_l -M =1$. Then
\begin{equation}
\label{deltaG}
\Delta_t G = \sum_{\substack{(i_1,\dots,i_N)\in\{0,1\}^N\\ \sum_{l=1}^N i_l \leq M}} (-1)^{\sum_{l=1}^N i_l} G_{t_1-i_1,\dots,t_N-i_N}.
\end{equation}
By Lemma \ref{lemma:auxiliary2}, $\Delta_t G$ consists of terms $e^{-\langle j,\Theta\rangle} \Delta_j Y$ with $j_l \leq t_l$ for every $l$ and $\sum_{l=1}^N j_l \geq 1$. 
Let $m$ be the number of indices $l$ for which $j_l = t_l$. \\
Assume that $m < N$.
By Lemma \ref{lemma:auxiliary2}, $e^{-\langle j,\Theta\rangle} \Delta_j Y$ belongs to summands of \eqref{deltaG} that satisfy $j_l \leq t_l -i_l$ for every $l$. That is, $m$ of the indices $i_l$ are zero while the remaining $N-m$ indices may be zeros or ones. 
In addition, 
$$ 1\leq \sum_{l=1}^N j_l \leq \sum_{l=1}^N t_l - (N-m)$$
giving
\begin{equation}
\label{bound}
N-m \leq \sum_{l=1}^N t_l - 1 = M.
\end{equation}
Now if $N-m$ is odd, then by Lemma \ref{lemma:auxiliary} and \eqref{bound}, the number of terms $e^{-\langle j,\Theta\rangle} \Delta_j Y$ in \eqref{deltaG} with a positive sign is 
\begin{align*}
\binom{N-m}{0}+\ldots+\binom{N-m}{\min\{N-m, M\} - 1} = \binom{N-m}{0}+\ldots+ \binom{N-m}{N-m-1} = 2^{N-m-1}.
\end{align*}
Thus, terms $e^{-\langle j,\Theta\rangle} \Delta_j Y$ cancel out in \eqref{deltaG}. \\
Similarly, if $N-m$ is even, then the number of terms $e^{-j\Theta} \Delta_j Y$ with a positive sign is 
\begin{equation*}
\binom{N-m}{0}+\dots+\binom{N-m}{\min\{N-m, M\}} = \binom{N-m}{0}+\dots+ \binom{N-m}{N-m}.
\end{equation*}
Again, by Lemma \ref{lemma:auxiliary}, terms $e^{-\langle j,\Theta\rangle} \Delta_j Y$ cancel out in \eqref{deltaG}.\\
If $m=N$, we have that $e^{-\langle j,\Theta\rangle} \Delta_j Y = e^{-\langle t,\Theta\rangle} \Delta_t Y$ belongs only to the summand of \eqref{deltaG} with $i=\bf{0}$. Hence we have shown that
$$ \Delta_t G = e^{-\langle t,\Theta\rangle} \Delta_t Y\quad\text{for every $t$ such that } \sum_{l=1}^N t_l \geq 1.$$
This proves the claim for the case $\sum_{l=1}^N t_l \geq 1$. 

Assume next that $\sum_{l=1}^N t_l \leq 0$ and let $M$ be such that $\sum_{l=1}^N t_l - M = -N$. Then 
\begin{equation}
\label{deltaG2}
\Delta_t G = \sum_{\substack{(i_1,\dots,i_N)\in\{0,1\}^N\\ \sum_{l=1}^N i_l \geq M}} (-1)^{\sum_{l=1}^N i_l} G_{t_1-i_1,\dots,t_N-i_N}.
\end{equation}
By Lemma \ref{lemma:auxiliary2}, $\Delta_t G$ consists of terms $e^{-\langle j,\Theta\rangle} \Delta_j Y$ with $j_l \geq t_l$ for every $l$ and $\sum_{l=1}^N j_l \leq 0$. 
As before, let $m$ be the number of indices $l$ for which $j_l = t_l$. If $m < N$, then,
by Lemma \ref{lemma:auxiliary2}, $e^{-\langle j,\Theta\rangle} \Delta_j Y$ belongs to summands of \eqref{deltaG2} that satisfy $j_l \geq t_l -i_l+1$ for every $l$. That is, $m$ of the indices $i_l$ are equal to one while the remaining $N-m$ indices may be zeros or ones. 
In addition,
$$ 0 \geq \sum_{l=1}^N j_l \geq \sum_{l=1}^N t_l + (N-m) = M-m$$
giving $m \geq M$ and $N-m\leq N-M$.
If $N-m$ is odd, by Lemma \ref{lemma:auxiliary}, the number of terms $e^{-\langle j,\Theta\rangle} \Delta_j Y$ in \eqref{deltaG2} with the sign $(-1)^{N+m}$ is equal to
\begin{align*}
\binom{N-m}{0}+\dots+\binom{N-m}{N-m-1} = 2^{N-m-1}. 
\end{align*}
Thus, terms cancel out in \eqref{deltaG2}. Similarly, terms cancel out when $N-m$ is even. Finally, for the case $m=N$ we observe that $e^{-\langle j,\Theta\rangle} \Delta_j Y = e^{-\langle t,\Theta\rangle} \Delta_t Y$ belongs only to the summand of \eqref{deltaG2} with $i=\bf{1}$. The corresponding sign is $(-1)^N(-1)^N = 1$. \\
To conclude, we have shown that
$$ \Delta_t G = e^{-\langle t,\Theta\rangle} \Delta_t Y\quad\text{for every $t$ such that } \sum_{l=1}^N t_l \leq 0.$$
This completes the proof of item $(ii)$.
 \\
\newline
Item $(iii)$: For $s= (s_1,\dots,s_N)\in\mathbb{Z}^N$,
\begin{equation*}
\begin{split}
\Delta_{t+s} G & = e^{-\langle t+s,\Theta\rangle} \Delta_{t+s} Y\\
 &= e^{-\langle t,\Theta\rangle} e^{-\langle s,\Theta\rangle}  \sum_{(i_1,\dots,i_N)\in\{0,1\}^N} (-1)^{\sum_{l=1}^N i_l} Y_{e^{t_1+s_1-i_1},\dots,e^{t_N+s_N-i_N}}\\
 &\overset{\text{law}}{=} e^{-\langle t,\Theta\rangle}\sum_{(i_1,\dots,i_N)\in\{0,1\}^N} (-1)^{\sum_{l=1}^N i_l} Y_{e^{t_1-i_1},\dots,e^{t_N-i_N}}\\
 &= e^{-\langle t,\Theta\rangle} \Delta_t Y = \Delta_t G.
\end{split}
\end{equation*}
Treating multidimensional distribution similarly completes the proof of item $(iii)$.
\end{proof}
We are now ready to prove three results, Theorem \ref{theo:gchar1}, Theorem \ref{theo:gchar2}, and Lemma \ref{lemma:guniqueness}, that give us the main result of this article, Theorem \ref{theo:giff}.
\begin{theorem}
\label{theo:gchar1}
Let $\Theta = (\theta_1, \dots, \theta_N)\in (0,\infty)^N$ and let $X = (X_t)_{t\in\mathbb{Z}^N}$ be a random field. If for some $G=(G_t)_{t\in\mathbb{Z}^N}\in\mathcal{G}_{{\Theta}}$ it holds that
\begin{equation}
\label{eq:grecursion2}
X_t = \langle\hat{\Theta}, \hat{X}_t^-\rangle + \Delta_t G\quad\text{for every } t\in\mathbb{Z}^N,
\end{equation}
and
$$\lim_{m\to -\infty} e^{m\theta_j}X_{t_1,\dots,t_{j-1},m,t_{j+1},\dots,t_N} \overset{\p}{\longrightarrow} 0$$
for every $j\in\{1,\dots,N\}$ and $t_1, \dots,t_{j-1},t_{j+1},\dots, t_N \in \mathbb{Z}$, then $X=(X_t)_{t\in\mathbb{Z}^N}$ is stationary.
\end{theorem}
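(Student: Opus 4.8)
The plan is to reduce the statement to the Lamperti correspondence of Theorem~\ref{thm:lamperti}: it suffices to show that the Lamperti transform $Y_{e^t} := (\mathcal{L}_\Theta X)_{e^t} = e^{\langle t,\Theta\rangle} X_t$ is $\Theta$-self-similar, since then $X = \mathcal{L}_\Theta^{-1} Y$ is stationary. The first step is purely algebraic. Multiplying \eqref{eq:grecursion2} by $e^{\langle t,\Theta\rangle}$ and unfolding Definition~\ref{defi:inner2}, one finds that \eqref{eq:grecursion2} is equivalent to the identity
\[
\Delta_t Y \;=\; e^{\langle t,\Theta\rangle}\,\Delta_t G \qquad\text{for every } t\in\mathbb{Z}^N,
\]
where $\Delta_t Y := \sum_{i\in\{0,1\}^N}(-1)^{\sum_l i_l} Y_{e^{t_1-i_1},\dots,e^{t_N-i_N}}$; this is exactly the relation of Lemma~\ref{lemma:gG}(ii), read from $G$ towards $Y$. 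In the same way, condition~(i) translates into the statement that $Y_{e^s}\to 0$ in probability whenever one coordinate of $s$ tends to $-\infty$ while the remaining coordinates are held fixed.

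Next I would establish a series representation of $Y$ in terms of the square increments of $G$. Over a box $[-M_1,t_1]\times\cdots\times[-M_N,t_N]$ the square increments telescope coordinatewise, giving the exact identity
\begin{align*}
\sum_{k_1=-M_1}^{t_1}\cdots\sum_{k_N=-M_N}^{t_N} e^{\langle k,\Theta\rangle}\Delta_k G
&= \sum_{k_1=-M_1}^{t_1}\cdots\sum_{k_N=-M_N}^{t_N}\Delta_k Y \\
&= \sum_{\varepsilon\in\{0,1\}^N}(-1)^{\sum_l \varepsilon_l} Y_{e^{v(\varepsilon)}},
\end{align*}
where $v(\varepsilon)_l = t_l$ if $\varepsilon_l = 0$ and $v(\varepsilon)_l = -M_l-1$ if $\varepsilon_l = 1$. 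Taking the iterated limits in the order of Definition~\ref{defi:class} (first $M_N\to\infty$, then $M_{N-1}$, and so on up to $M_1$), at each stage the corner terms that still involve the current index $M_l$ have their $l$-th coordinate tending to $-\infty$ with all other coordinates fixed, hence vanish in probability by condition~(i); only the term $\varepsilon=\mathbf{0}$, which equals $Y_{e^t}$, survives. Since the left-hand side converges in probability precisely because $G\in\mathcal{G}_\Theta$, we obtain, for every $t$,
\[
Y_{e^t} = \lim_{M_1\to\infty}\cdots\lim_{M_N\to\infty}\sum_{k_1=-M_1}^{t_1}\cdots\sum_{k_N=-M_N}^{t_N} e^{\langle k,\Theta\rangle}\Delta_k G \qquad\text{in probability.}
\]

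Finally I would read off self-similarity. Fix $s\in\mathbb{Z}^N$ and finitely many points $t^{(1)},\dots,t^{(n)}$; applying the representation to each $Y_{e^{t^{(r)}+s}}$ and substituting $k\mapsto k+s$ in the box sums pulls out a common factor $e^{\langle s,\Theta\rangle}$ and replaces $\Delta_k G$ by $\Delta_{k+s}G$ (the substitution only shifts the summation ranges, which does not affect the iterated limits). Since $G$ has stationary increments, $(\Delta_{k+s}G)_{k\in\mathbb{Z}^N}\overset{\text{law}}{=}(\Delta_k G)_{k\in\mathbb{Z}^N}$, so the random vectors formed by the box partial sums attached to $(Y_{e^{t^{(r)}+s}})_r$ and to $(e^{\langle s,\Theta\rangle}Y_{e^{t^{(r)}}})_r$ have the same finite-dimensional distributions for every $M_1,\dots,M_N$; passing to the limit (equality in law is preserved under limits in probability) yields $(Y_{e^{t^{(1)}+s}},\dots,Y_{e^{t^{(n)}+s}})\overset{\text{law}}{=}(e^{\langle s,\Theta\rangle}Y_{e^{t^{(1)}}},\dots,e^{\langle s,\Theta\rangle}Y_{e^{t^{(n)}}})$. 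Hence $Y$ is $\Theta$-self-similar, and Theorem~\ref{thm:lamperti} gives that $X$ is stationary.

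The main obstacle is the bookkeeping in the second step: one must make the coordinatewise telescoping over a box precise, track all $2^N$ corner contributions, and argue that every non-principal corner vanishes in the iterated limit using only the one-coordinate-at-a-time hypothesis~(i). This forces the order of the limits to be handled consistently with Definition~\ref{defi:class}, and it is here that the remark on order-independence of the limits in \eqref{eq:glimitcondition} is invoked. By contrast, the algebraic reduction of the first step and the distributional manipulation of the last step are routine.
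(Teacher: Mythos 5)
Your proof is correct, and it takes a genuinely different route from the paper's. The paper works directly with $X$: it treats \eqref{eq:grecursion2} as a first-order recursion in the $N$-th coordinate, solves it using hypothesis (i) to kill the boundary term, and then runs an induction over the remaining coordinates to arrive at the representation $X_t = e^{-\langle t,\Theta\rangle}\sum_{j_1=-\infty}^{t_1}\cdots\sum_{j_N=-\infty}^{t_N}e^{\langle j,\Theta\rangle}\Delta_j G$; stationarity of $X$ is then read off from the stationarity of the increments of $G$ by shifting the summation index. You instead apply the Lamperti change of variables first, which converts \eqref{eq:grecursion2} into the clean identity $\Delta_t Y = e^{\langle t,\Theta\rangle}\Delta_t G$ (this is exactly Lemma \ref{lemma:gG}(ii) read in the opposite direction, which the paper only exploits for the converse implication in Theorem \ref{theo:gchar2}); the representation then drops out of a single coordinatewise telescoping over a box, with the $2^N-1$ non-principal corners killed one group at a time by hypothesis (i), and you conclude via self-similarity of $Y$ and Theorem \ref{thm:lamperti} rather than via stationarity of $X$ directly. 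The two arguments produce the same series representation and the same final shift-in-law computation (yours carries an extra factor $e^{\langle s,\Theta\rangle}$, absorbed by the self-similarity definition), but your linearisation replaces the paper's multi-page induction with a one-line telescoping identity, at the cost of routing through the Lamperti correspondence; the paper's version is more self-contained and makes the explicit moving-average form of $X$ the visible endpoint. One small remark: you do not actually need the order-independence of the limits in \eqref{eq:glimitcondition} anywhere, since you consistently take the iterated limits in the order fixed by Definition \ref{defi:class}, and the reindexing $M_l\mapsto M_l+s_l$ in the final step preserves each iterated limit; so the appeal to that remark in your closing paragraph can be dropped.
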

\begin{proof}
Denote $Z_t = \Delta_t G$ and $i=(i_1,\ldots,i_N)$. From \eqref{eq:grecursion2} and Definition \ref{defi:inner2}, we get
\begin{equation*}
X_t = \sum_{\substack{(i_1,\dots,i_N)\in\{0,1\}^N\\ i\neq \bf{0}}} (-1)^{1+\sum_{l=1}^N i_l} e^{-\langle i,\Theta\rangle} X_{{t_1-i_1},\dots,{t_N-i_N}} + Z_t,
\end{equation*}
which gives
\begin{equation}
\label{firststep}
\begin{split}
&X_t - \sum_{\substack{(i_1,\dots,i_{N-1})\in\{0,1\}^{N-1}\\ i\neq \bf{0}}} (-1)^{1+\sum_{l=1}^{N-1} i_l} e^{ -\langle(i_1, \dots,i_{N-1}, 0 ), \Theta\rangle} X_{{t_1-i_1},\dots, t_{N-1}-i_{N-1}, {t_N}}\\
&= \sum_{(i_1,\dots,i_{N-1})\in\{0,1\}^{N-1}} (-1)^{\sum_{l=1}^{N-1} i_l} e^{ -\langle(i_1, \dots,i_{N-1}, 0 ), \Theta\rangle} X_{{t_1-i_1},\dots, t_{N-1}-i_{N-1}, {t_N}}\\
&= \sum_{(i_1,\dots,i_{N-1})\in\{0,1\}^{N-1}} (-1)^{\sum_{l=1}^{N-1} i_l} e^{ -\langle(i_1, \dots,i_{N-1}, 1 ),\Theta\rangle} X_{{t_1-i_1},\dots, t_{N-1}-i_{N-1}, {t_N-1}} + Z_t.
\end{split}
\end{equation}
Set
\begin{equation*}
Y_{t_1,\dots,t_{N-1}}(t_N) = \sum_{(i_1,\dots,i_{N-1})\in\{0,1\}^{N-1}} (-1)^{\sum_{l=1}^{N-1} i_l} e^{ -\langle(i_1, \dots,i_{N-1}, 0 ), \Theta\rangle} X_{{t_1-i_1},\dots, t_{N-1}-i_{N-1}, {t_N}}.
\end{equation*}
Then, by iterating the recursive Equation \eqref{firststep}, we get
\begin{equation*}
\begin{split}
Y_{t_1,\dots,t_{N-1}}(t_N) 
=& e^{-\theta_N} Y_{t_1,\dots,t_{N-1}}(t_N-1) + Z_t \\
=& e^{-(n+1)\theta_N} Y_{t_1,\dots,t_{N-1}}(t_N-n-1) + \sum_{j_N=0}^n e^{-j_N\theta_N} Z_{t_1,\dots,t_{N-1}, t_N-j_N}\\
=& e^{-(n+1)\theta_N} Y_{t_1,\dots,t_{N-1}}(t_N-n-1) + e^{-t_N\theta_N} \sum_{j_N=t_N -n}^{t_N} e^{j_N\theta_N}Z_{t_1,\dots,t_{N-1},j_N},
\end{split}
\end{equation*}
for every $n\in\mathbb{N}$. Above
\begin{equation*}
\begin{split}
e^{-(n+1)\theta_N} Y_{t_1,\dots,t_{N-1}}(t_N-n-1) &= e^{-t_N\theta_N} e^{(t_N-n-1)\theta_N}Y_{t_1,\dots,t_{N-1}}(t_N-n-1)\\
&= e^{-t_N\theta_N} e^{m\theta_N}Y_{t_1,\dots,t_{N-1}}(m)
\end{split}
\end{equation*}
by the change  of variable $m=t_N-n-1$. Furthermore
\begin{equation*}
\begin{split}
&e^{m\theta_N}Y_{t_1,\dots,t_{N-1}}(m) \\
=& e^{m\theta_N} \sum_{(i_1,\dots,i_{N-1})\in\{0,1\}^{N-1}} (-1)^{\sum_{l=1}^{N-1} i_l} e^{ -\langle(i_1, \dots,i_{N-1}, 0 ), \Theta\rangle} X_{{t_1-i_1},\dots, t_{N-1}-i_{N-1}, m},
\end{split}
\end{equation*}
which, by the assumptions, converges to zero in probability as $m\to -\infty$.
Hence
\begin{equation*}
\begin{split}
& Y_{t_1,\dots,t_{N-1}}(t_N) \\
&=  \sum_{(i_1,\dots,i_{N-1})\in\{0,1\}^{N-1}} (-1)^{\sum_{l=1}^{N-1} i_l} e^{ -\langle(i_1, \dots,i_{N-1}, 0 ), \Theta\rangle} X_{{t_1-i_1},\dots, t_{N-1}-i_{N-1}, {t_N}}\\
&= e^{-t_N\theta_N} \sum_{j_N=-\infty}^{t_N} e^{j_N\theta_N}Z_{t_1,\dots,t_{N-1},j_N} \eqqcolon Q_t^{(N)}.
\end{split}
\end{equation*}
In the following summations, let $(i_1,\dots,i_{N-k-1}) \in \{0,1\}^{N-k-1}$  and $(i_1,\dots,i_{N-k-2})\in\{0,1\}^{N-k-2}$. 
We proceed by induction and assume that for some $k \in \mathbb{N}\cup \{0\}$ it holds that
\begin{align}
%\label{induction}
&\sum_{(i_1,\dots,i_{N-k-1})} (-1)^{\sum_{l=1}^{N-k-1} i_l} e^{ -\langle(i_1, \dots,i_{N-k-1}, 0, \dots,0 ), \Theta\rangle} X_{{t_1-i_1},\dots, t_{N-k-1} - i_{N-k-1}, t_{N-k},\dots, {t_N}}\nonumber\\
=& e^{-\sum_{l=0}^k t_{N-l}\theta_{N-l}} \sum_{j_{N-k}=-\infty}^{t_{N-k}} \dots \sum_{j_N=-\infty}^{t_N} e^{\sum_{l=0}^k j_{N-l}\theta_{N-l}} Z_{t_1,\dots,t_{N-k-1},j_{N-k},\dots,j_N} 
\eqqcolon  Q_t^{(N-k)} \label{induction}.
\end{align}
Now
\begin{align}
&\sum_{(i_1,\dots,i_{N-k-2})} \big((-1)^{\sum_{l=1}^{N-k-2} i_l} e^{ -\langle(i_1, \dots,i_{N-k-2}, 0, \dots,0 ), \Theta\rangle} \nonumber\\
&\cdot X_{{t_1-i_1},\dots, t_{N-k-2} - i_{N-k-2}, t_{N-k-1},\dots, {t_N}}\big)\nonumber\\
=&-\sum_{(i_1,\dots,i_{N-k-2})} \big((-1)^{1+\sum_{l=1}^{N-k-2} i_l} e^{ -\langle(i_1, \dots,i_{N-k-2}, 1,0, \dots,0 ), \Theta\rangle} \nonumber\\
&\cdot X_{{t_1-i_1},\dots, t_{N-k-2} - i_{N-k-2}, t_{N-k-1}-1,\dots, {t_N}}\big)+Q_t^{(N-k)}\nonumber\\
=&\sum_{(i_1,\dots,i_{N-k-2})} \big((-1)^{\sum_{l=1}^{N-k-2} i_l} e^{ -\langle(i_1, \dots,i_{N-k-2}, 1,0, \dots,0 ), \Theta\rangle}\nonumber \\
&\cdot X_{{t_1-i_1},\dots, t_{N-k-2} - i_{N-k-2}, t_{N-k-1}-1,\dots, {t_N}}\big)+Q_t^{(N-k)}. \label{inductionstep}
\end{align}
Let $t^* = (t_1,\dots, t_{N-k-2},t_{N-k},\dots, t_N)$ and define
\begin{equation*}
\begin{split}
&Y_{t^*} (t_{N-k-1})\\
=&\sum_{(i_1,\dots,i_{N-k-2})} (-1)^{\sum_{l=1}^{N-k-2} i_l} e^{ -\langle(i_1, \dots,i_{N-k-2}, 0, \dots,0 ), \Theta\rangle} X_{{t_1-i_1},\dots, t_{N-k-2} - i_{N-k-2}, t_{N-k-1},\dots, {t_N}}.
\end{split}
\end{equation*}
Equation \eqref{inductionstep} gives
\begin{equation*}
\begin{split}
Y_{t^*} (t_{N-k-1}) 
=& e^{-\theta_{N-k-1}}Y_{t^*} (t_{N-k-1}-1) + Q_t^{(N-k)} \\
=& e^{-(n+1)\theta_{N-k-1}} Y_{t^*} (t_{N-k-1}-n-1) \\
&+ \sum_{j_{N-k-1}=0}^n e^{-j_{N-k-1}\theta_{N-k-1}} Q^{(N-k)}_{t_1,\dots,t_{N-k-2},t_{N-k-1}-j_{N-k-1}, t_{N-k}, \dots, t_N} \\
=&  e^{-(n+1)\theta_{N-k-1}} Y_{t^*} (t_{N-k-1}-n-1) \\
&+e^{-t_{N-k-1}\theta_{N-k-1}}\sum_{\mathclap{j_{N-k-1}=t_{N-k-1}-n}}^{t_{N-k-1}} e^{j_{N-k-1}\theta_{N-k-1}}Q^{(N-k)}_{t_1,\dots,t_{N-k-2},j_{N-k-1}, t_{N-k}, \dots, t_N}
\end{split}
\end{equation*}
for every $n\in\mathbb{N}$. Above
\begin{equation*}
\begin{split}
e^{-(n+1)\theta_{N-k-1}} Y_{t^*} (t_{N-k-1}-n-1) 
=& e^{-t_{N-k-1}\theta_{N-k-1}} e^{(t_{N-k-1}-n-1)\theta_{N-k-1}}Y_{t^*} (t_{N-k-1}-n-1)\\
=&  e^{-t_{N-k-1}\theta_{N-k-1}} e^{m\theta_{N-k-1}}Y_{t^*}(m)
\end{split}
\end{equation*}
by the change of variable $m=t_{N-k-1}-n-1$. As before, the expression converges to zero in probability as $m\to\-\infty$. Hence, we obtain that
\begin{equation*}
\begin{split}
&Y_{t^*} (t_{N-k-1})\\
=&\sum_{(i_1,\dots,i_{N-k-2})} \big((-1)^{\sum_{l=1}^{N-k-2} i_l} e^{ -\langle(i_1, \dots,i_{N-k-2}, 0, \dots,0 ), \Theta\rangle} \\
&\cdot X_{{t_1-i_1},\dots, t_{N-k-2} - i_{N-k-2}, t_{N-k-1},\dots, {t_N}}\big)\\
=& e^{-t_{N-k-1}\theta_{N-k-1}}\sum_{\mathclap{j_{N-k-1}=-\infty}}^{t_{N-k-1}} e^{j_{N-k-1}\theta_{N-k-1}}Q^{(N-k)}_{t_1,\dots,t_{N-k-2},j_{N-k-1}, t_{N-k}, \dots, t_N}\\
=& e^{-t_{N-k-1}\theta_{N-k-1}}\Big(\sum_{\mathclap{j_{N-k-1}=-\infty}}^{t_{N-k-1}} e^{j_{N-k-1}\theta_{N-k-1}}e^{-\sum_{l=0}^k t_{N-l}\theta_{N-l}} \\
&\cdot \sum_{j_{N-k}=-\infty}^{t_{N-k}} \dots \sum_{j_N=-\infty}^{t_N} e^{\sum_{l=0}^k j_{N-l}\theta_{N-l}} Z_{t_1,\dots,t_{N-k-2},j_{N-k-1},\dots,j_N}\Big)\\
 =& e^{-\sum_{l=0}^{k+1} t_{N-l}\theta_{N-l}} \sum_{j_{N-k-1}=-\infty}^{t_{N-k-1}} \dots \sum_{j_N=-\infty}^{t_N} e^{\sum_{l=0}^{k+1} j_{N-l}\theta_{N-l}} Z_{t_1,\dots,t_{N-k-2},j_{N-k-1},\dots,j_N},
\end{split}
\end{equation*}
which proves the induction step. Therefore choosing $k= N-2$ in \eqref{induction} yields
\begin{equation*}
\begin{split}
&\sum_{i_1\in\{0,1\}} (-1)^{i_1} e^{-\langle(i_1,0,\dots,0),\Theta\rangle} X_{t_1-i_1,t_2,\dots,t_N} = X_t - e^{-\theta_1}X_{t_1-1,t_2,\dots,t_N}\\
&=  e^{-\sum_{l=0}^{N-2} t_{N-l}\theta_{N-l}} \sum_{j_2=-\infty}^{t_2} \dots \sum_{j_N=-\infty}^{t_N} e^{\sum_{l=0}^{N-2} j_{N-l}\theta_{N-l}} Z_{t_1,j_2,\dots,j_N} \eqqcolon Q_t^{(2)}
\end{split}
\end{equation*}
and we obtain a recursive equation
\begin{equation*}
X_t = e^{-\theta_1}X_{t_1-1,t_1,\dots,t_N} + Q_t^{(2)}.
\end{equation*}
By repeating the earlier procedure once more, we obtain that 
\begin{equation*}
\begin{split}
X_t 
=& e^{-t_1\theta_1} \sum_{j_1=-\infty}^{t_1} e^{j_1\theta_1} Q^{(2)}_{j_1,t_2,\dots,t_N}\\
=& e^{-t_1\theta_1} \sum_{j_1=-\infty}^{t_1} e^{j_1\theta_1}e^{-\sum_{l=0}^{N-2} t_{N-l}\theta_{N-l}} \sum_{j_2=-\infty}^{t_2} \dots \sum_{j_N=-\infty}^{t_N} e^{\sum_{l=0}^{N-2} j_{N-l}\theta_{N-l}} Z_{j_1,j_2,\dots,j_N}\\
=& e^{-\sum_{l=1}^N t_l\theta_l}\sum_{j_1=-\infty}^{t_1} \dots \sum_{j_N=-\infty}^{t_N} e^{\sum_{l=1}^N j_l\theta_l} Z_{j_1,j_2,\dots,j_N}\\
=&e^{-\sum_{l=1}^N t_l\theta_l}\sum_{j_1=-\infty}^{t_1} \dots \sum_{j_N=-\infty}^{t_N} e^{\sum_{l=1}^N j_l\theta_l} \Delta_{j_1,j_2,\dots,j_N} G,
\end{split}
\end{equation*}
which, since $G\in\mathcal{G}_{{\Theta}}$, defines an almost surely finite random variable. Hence it remains to prove that $X$ is stationary. To this end, we show that the one dimensional distributions of $X$ are stationary. The proof extends straightforwardly to multidimensional distributions. Let $s= (s_1,\dots,s_N) \in\mathbb{Z}^N$. Note that
$$X_t = \sum_{j_1=0}^\infty \dots \sum_{j_N=0}^\infty e^{-\sum_{l=1}^N j_l\theta_l}\Delta_{t_1-j_1,\dots,t_N-j_N}G.$$
Since $G$ is a stationary increment field, we have that
\begin{equation*}
\begin{split}
&\sum_{j_1=0}^{M_1} \dots\sum_{j_N=0}^{M_N} e^{-\sum_{l=1}^N j_l\theta_l}\Delta_{t_1+s_1-j_1,\dots,t_N+s_N-j_N}G \\
\overset{\text{law}}{=}& \sum_{j_1=0}^{M_1} \dots\sum_{j_N=0}^{M_N} e^{-\sum_{l=1}^N j_l\theta_l}\Delta_{t_1-j_1,\dots,t_N-j_N}G
\end{split}
\end{equation*}
for every $M_1, \dots,M_N\in\mathbb{N}$. Moreover, since $G\in\mathcal{G}_{{\Theta}}$, the iterated limits of both sides converge and hence, the limits are equal in distribution. This gives
\begin{equation*}
\begin{split}
X_{t+s} &= \sum_{j_1=0}^{\infty} \dots\sum_{j_N=0}^{\infty} e^{-\sum_{l=1}^N j_l\theta_l}\Delta_{t_1+s_1-j_1,\dots,t_N+s_N-j_N}G\\
 &\overset{\text{law}}{=} \sum_{j_1=0}^{\infty} \dots\sum_{j_N=0}^{\infty} e^{-\sum_{l=1}^N j_l\theta_l}\Delta_{t_1-j_1,\dots,t_N-j_N}G = X_t
\end{split}
\end{equation*}
and thus the proof is completed.
\end{proof}
\begin{rem}
\label{rem:peculiar-condition}
The property $G_t = 0$ for all $t$ such that $\sum_{i=1}^N t_i \in\{0,-1,\dots,-N+1\}$ of the class $\mathcal{G}_{\Theta}$ is not utilized in the proof of Theorem \ref{theo:gchar1}. However, in order to obtain uniqueness in the characterising Equation \eqref{eq:grecursion2}, we need to pose this additional assumption, see Lemma \ref{lemma:guniqueness}.
\end{rem}
\begin{theorem}
\label{theo:gchar2}
Let $\Theta = (\theta_1, \dots, \theta_N)\in (0,\infty)^N$. Assume that the field $X = (X_t)_{t\in\mathbb{Z}^N}$ is stationary. Then there exists $G = (G_t)_{t\in\mathbb{Z}^N}\in\mathcal{G}_{{\Theta}}$ such that
$$X_t = \langle\hat{\Theta}, \hat{X}_t^-\rangle + \Delta_t G\quad\text{for every } t\in\mathbb{Z}^N.$$
\end{theorem}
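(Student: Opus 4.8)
\textbf{Proof proposal for Theorem \ref{theo:gchar2}.}

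The plan is to construct $G$ explicitly through the Lamperti transformation together with Lemma \ref{lemma:gG}. Since $X$ is stationary, Theorem \ref{thm:lamperti} shows that $Y_{e^t} := (\mathcal{L}_\Theta X)_{e^t} = e^{\langle t,\Theta\rangle} X_t$ is $\Theta$-self-similar. Applying Lemma \ref{lemma:gG} to this $Y$ produces a field $G = (G_t)_{t\in\mathbb{Z}^N}$ that has stationary increments, satisfies $G_t = 0$ whenever $\sum_{l=1}^N t_l \in \{0,-1,\dots,-N+1\}$, and obeys $\Delta_t G = e^{-\langle t,\Theta\rangle}\Delta_t Y$ for every $t$, where $\Delta_t Y$ denotes the exponential square increment of $Y$ introduced in Lemma \ref{lemma:gG}. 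It then remains to check that $G$ satisfies the recursion \eqref{eq:grecursion3} and that $G\in\mathcal{G}_\Theta$.

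For the recursion, I would substitute $Y_{e^{t_1-i_1},\dots,e^{t_N-i_N}} = e^{\langle t,\Theta\rangle}e^{-\langle i,\Theta\rangle} X_{t_1-i_1,\dots,t_N-i_N}$ into the definition of $\Delta_t Y$ and use $\Delta_t G = e^{-\langle t,\Theta\rangle}\Delta_t Y$ to get
\[
\Delta_t G = \sum_{(i_1,\dots,i_N)\in\{0,1\}^N}(-1)^{\sum_{l=1}^N i_l}e^{-\langle i,\Theta\rangle}X_{t_1-i_1,\dots,t_N-i_N} = X_t + \sum_{\substack{i\in\{0,1\}^N\\ i\neq\mathbf{0}}}(-1)^{\sum_{l=1}^N i_l}e^{-\langle i,\Theta\rangle}X_{t_1-i_1,\dots,t_N-i_N},
\]
and by Definition \ref{defi:inner2} the last sum equals $-\langle\hat{\Theta},\hat{X}_t^-\rangle$; rearranging gives $X_t = \langle\hat{\Theta},\hat{X}_t^-\rangle + \Delta_t G$.

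To see $G\in\mathcal{G}_\Theta$, note that the same identity $\Delta_t G = e^{-\langle t,\Theta\rangle}\Delta_t Y$ yields $e^{\sum_{l=1}^N j_l\theta_l}\Delta_{(j_1,\dots,j_N)}G = \Delta_j Y$, so the partial sum in \eqref{eq:glimitcondition} equals $\sum_{j_1=-M_1}^{t_1}\cdots\sum_{j_N=-M_N}^{t_N}\Delta_j Y$. Since $\Delta_j Y$ is a mixed first difference in each coordinate, this telescopes to $\sum_{i\in\{0,1\}^N}(-1)^{\sum_{l=1}^N i_l} Y_{e^{\tau_1(i_1)},\dots,e^{\tau_N(i_N)}}$ with $\tau_l(0)=t_l$ and $\tau_l(1)=-M_l-1$. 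The key observation is that stationarity of $X$ already forces $e^{m\theta_j}X_{t_1,\dots,t_{j-1},m,t_{j+1},\dots,t_N}\to 0$ in probability as $m\to-\infty$, because that random variable has the same (fixed) law as $X_{\mathbf{0}}$ for every $m$ while $e^{m\theta_j}\to 0$ — that is, condition (i) of Theorem \ref{theo:giff} holds automatically. Taking the iterated limits $M_N\to\infty$, then $M_{N-1}\to\infty$, and so on, each corner term having $i_l=1$ in the coordinate currently sent to infinity converges to $0$ in probability (the other coordinates only contribute a fixed multiplicative constant), so only the $i=\mathbf{0}$ corner survives and the limit equals $Y_{e^{t_1},\dots,e^{t_N}}=e^{\langle t,\Theta\rangle}X_t$, an almost surely finite random variable whose value does not depend on the order of the limits. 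Hence $G\in\mathcal{G}_\Theta$ and the proof is complete.

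The main obstacle is this last paragraph: turning the telescoping plus iterated-limit argument into a rigorous statement, in particular recognising that stationarity alone supplies the decay needed to annihilate the boundary corners, and checking carefully that finitely many convergences in probability may be summed and that the prefactors depending on the still-finite truncation levels $M_l$ cause no difficulty.
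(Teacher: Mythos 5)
Your proposal is correct, and for the first half it follows the paper exactly: stationarity of $X$ gives a $\Theta$-self-similar $Y=\mathcal{L}_\Theta X$ via Theorem \ref{thm:lamperti}, Lemma \ref{lemma:gG} supplies $G$ with $\Delta_t G = e^{-\langle t,\Theta\rangle}\Delta_t Y$, and unwinding the Lamperti substitution $Y_{e^{t-i}}=e^{\langle t,\Theta\rangle}e^{-\langle i,\Theta\rangle}X_{t-i}$ in $\Delta_t Y$ yields $\Delta_t G = X_t - \langle\hat{\Theta},\hat{X}_t^-\rangle$, which is the recursion (the paper arranges the same algebra as a decomposition of $X_t$ into $e^{-\langle t,\Theta\rangle}(Y_{e^t}-\Delta_t Y)$ plus $e^{-\langle t,\Theta\rangle}\Delta_t Y$, but it is the identical computation). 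Where you genuinely diverge is in verifying $G\in\mathcal{G}_\Theta$. The paper invokes the inductive machinery of the proof of Theorem \ref{theo:gchar1} together with the decay property $e^{m\theta_j}X_{t_1,\dots,m,\dots,t_N}\to 0$ to arrive at Representation \eqref{eq:grep}; you instead observe that $e^{\langle j,\Theta\rangle}\Delta_j G = \Delta_j Y$ turns the partial sums of \eqref{eq:glimitcondition} into exact telescoping sums over the $2^N$ corners of the rectangle, and that stationarity alone forces every corner with a coordinate sent to $-\infty$ to vanish in probability (since $X_{t_1,\dots,m,\dots,t_N}$ has the fixed law of $X_{\mathbf{0}}$ while the exponential prefactor tends to zero). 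This is a valid and in fact more self-contained route: it avoids any appeal to the long induction of Theorem \ref{theo:gchar1}, it makes manifest that the iterated limit is order-independent and equals $e^{\langle t,\Theta\rangle}X_t$ (hence also reproves Lemma \ref{lemma:GinG} directly), and the only facts it needs — that a finite sum of sequences converging in probability converges, and that the prefactors in the still-finite coordinates are constants — are elementary. The rigor you worry about in your closing paragraph is already present: the telescoping identity is purely algebraic with no limits involved, and the limit passage concerns only finitely many terms at each stage. The paper's route buys economy of exposition (the representation \eqref{eq:grep} is needed anyway and is established once, in Theorem \ref{theo:gchar1}); yours buys a shorter logical path for this particular theorem.
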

\begin{proof}
Applying Lamperti transformation gives
\begin{equation*}
\begin{split}
\Delta_t X &= X_t - X_t^- = e^{-\langle t,\Theta\rangle} Y_{e^t} - X^-_t = e^{-\langle t,\Theta\rangle} ( Y_{e^t} - \Delta_t Y) - X_t^- + e^{-\langle t,\Theta\rangle} \Delta_t Y.
\end{split}
\end{equation*}
Hence
\begin{equation*}
X_t = e^{-\langle t,\Theta\rangle} ( Y_{e^t} - \Delta_t Y) + e^{-\langle t,\Theta\rangle} \Delta_t Y,
\end{equation*}
where
\begin{equation*}
 Y_{e^t} - \Delta_t Y = \sum_{(i_1,\dots,i_N)\in\{0,1\}^N, i\neq \bf{0}} (-1)^{1+\sum_{l=1}^N i_l} Y_{e^{t_1-i_1},\dots,e^{t_N-i_N}}.
\end{equation*}
Furthermore
\begin{equation*}
\begin{split}
 e^{-\langle t,\Theta\rangle} ( Y_{e^t} - \Delta_t Y) 
=& \sum_{(i_1,\dots,i_N)\in\{0,1\}^N, i\neq \bf{0}} (-1)^{1+\sum_{l=1}^N i_l}e^{-\langle t-i,\Theta\rangle} e^{-\langle i,\Theta\rangle} Y_{e^{t_1-i_1},\dots,e^{t_N-i_N}}\\
 =& \sum_{(i_1,\dots,i_N)\in\{0,1\}^N, i\neq\bf{0}} (-1)^{1+\sum_{l=1}^N i_l} e^{-\langle i,\Theta\rangle} X_{{t_1-i_1},\dots,{t_N-i_N}} = \langle\hat{\Theta},\hat{X}_t^-\rangle,
\end{split}
\end{equation*}
where the last equality follows by Lamperti transformation. Let $G$ be the field defined in Lemma \ref{lemma:gG}. Then $X_t = \langle\hat{\Theta}, \hat{X}_t^- \rangle + \Delta_t G$. Moreover, the proof of Theorem \ref{theo:gchar1} and the property
$$\lim_{m\to -\infty} e^{m\theta_i}X_{t_1,\dots,t_{i-1},m,t_{i+1},\dots,t_N} \overset{\p}{\longrightarrow} 0,$$
for every $i\in\{1,\dots,N\}$ and $t_1, \dots,t_{i-1},t_{i+1},\dots, t_N \in \mathbb{Z}$, give us the representation
\begin{equation}
\label{eq:grep}
X_t = e^{-\sum_{l=1}^N t_l\theta_l}\sum_{j_1=-\infty}^{t_1} \dots \sum_{j_N=-\infty}^{t_N} e^{\sum_{l=1}^N j_l\theta_l} \Delta_{(j_1,j_2,\dots,j_N)} G.
\end{equation}
Hence, $G\in\mathcal{G}_{{\Theta}}$.
\end{proof}
As a by-product, we observe the following result.
\begin{lemma}
\label{lemma:GinG}
Let $G$ be defined as in Lemma \ref{lemma:gG}. Then $G\in\mathcal{G}_{{\Theta}}$.
\end{lemma}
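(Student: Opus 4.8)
The plan is to note that Lemma \ref{lemma:gG} already supplies two of the three defining properties of the class $\mathcal{G}_{\Theta}$ in Definition \ref{defi:class}: the field $G$ has stationary increments by part (iii), and $G_t=0$ whenever $\sum_{l=1}^N t_l\in\{0,-1,\dots,-N+1\}$ by part (i). So the only thing left to establish is the convergence of the iterated limit in \eqref{eq:glimitcondition}. The quickest route is a reduction to Theorem \ref{theo:gchar2}. Letting $Y$ denote the $\Theta$-self-similar field out of which $G$ is built, I would set $X_t=(\mathcal{L}^{-1}_\Theta Y)_t=e^{-\langle t,\Theta\rangle}Y_{e^t}$, which is stationary by Theorem \ref{thm:lamperti} and satisfies $\mathcal{L}_\Theta X=Y$ by Definition \ref{defi:lamperti}. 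The proof of Theorem \ref{theo:gchar2} applied to this $X$ exhibits, as its witnessing element of $\mathcal{G}_{\Theta}$, precisely the field that Lemma \ref{lemma:gG} produces from $\mathcal{L}_\Theta X=Y$, that is, our $G$; hence $G\in\mathcal{G}_{\Theta}$.

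Alternatively, one can check \eqref{eq:glimitcondition} directly, which is essentially the computation buried inside the proof of Theorem \ref{theo:gchar2}. By part (ii) of Lemma \ref{lemma:gG} one has $e^{\sum_{l=1}^N j_l\theta_l}\Delta_{(j_1,\dots,j_N)}G=\Delta_{(j_1,\dots,j_N)}Y$, so the multi-sum in \eqref{eq:glimitcondition} equals $\sum_{j_1=-M_1}^{t_1}\cdots\sum_{j_N=-M_N}^{t_N}\Delta_{(j_1,\dots,j_N)}Y$. Since $\Delta_j Y$ is the iterated first difference of the map $(j_1,\dots,j_N)\mapsto Y_{e^{j_1},\dots,e^{j_N}}$ in each coordinate, this telescopes over the box $\prod_{l=1}^N[-M_l,t_l]$ to $\sum_{(\varepsilon_1,\dots,\varepsilon_N)\in\{0,1\}^N}(-1)^{\varepsilon_1+\dots+\varepsilon_N}Y_{e^{b_1},\dots,e^{b_N}}$, where $b_l=t_l$ if $\varepsilon_l=0$ and $b_l=-M_l-1$ if $\varepsilon_l=1$. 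By $\Theta$-self-similarity in the $l$-th coordinate, any factor $Y_{\dots,e^{-M_l-1},\dots}$ with the other coordinates held at fixed integers equals in law $e^{(-M_l-1)\theta_l}Y_{\dots,1,\dots}$ and therefore tends to $0$ in probability as $M_l\to\infty$. Taking the iterated limits $M_N\to\infty,\dots,M_1\to\infty$ one after another (each step being a finite sum in which the terms carrying $\varepsilon_l=1$ vanish in probability) leaves exactly $Y_{e^{t_1},\dots,e^{t_N}}$, an almost surely finite random variable, which is \eqref{eq:glimitcondition}.

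The only delicate point is the combinatorial bookkeeping in the direct argument: one must verify that telescoping the $N$-fold square increment over the box genuinely leaves only the $2^N$ corner values with alternating signs, and that within the iterated limit the vanishing corner terms may be discarded one coordinate at a time. Both are routine; and the reduction to Theorem \ref{theo:gchar2} sidesteps them altogether, at the cost of appealing to the construction carried out inside that proof rather than merely to its statement.
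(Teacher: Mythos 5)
Your first route is exactly the paper's own proof: the paper notes that Lemma \ref{lemma:gG} already gives stationarity of the increments and the vanishing condition, writes $Y=\mathcal{L}_\Theta X$ for the stationary $X=\mathcal{L}^{-1}_\Theta Y$, and then invokes the computation inside the proof of Theorem \ref{theo:gchar2} (itself resting on the iteration in Theorem \ref{theo:gchar1}) to obtain Representation \eqref{eq:grep}, which is precisely the statement that the iterated limit \eqref{eq:glimitcondition} converges to the almost surely finite variable $e^{\langle t,\Theta\rangle}X_t=Y_{e^t}$. Your alternative direct argument is a genuinely different and, in my view, cleaner route: since $e^{\langle j,\Theta\rangle}\Delta_j G=\Delta_j Y$ by part (ii) of Lemma \ref{lemma:gG}, and $\Delta_j Y$ is the $N$-fold iterated backward difference of $j\mapsto Y_{e^{j_1},\dots,e^{j_N}}$, the partial sums over the box telescope to the $2^N$ alternating corner values, and $\Theta$-self-similarity kills every corner carrying a coordinate $-M_l-1$ in probability, one coordinate at a time in the order prescribed by \eqref{eq:glimitcondition}, leaving $Y_{e^{t_1},\dots,e^{t_N}}$. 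This sidesteps the long induction of Theorem \ref{theo:gchar1} entirely and makes the lemma self-contained (it also makes transparent the paper's remark that the order of the iterated limits is irrelevant); the cost is only the routine bookkeeping you flag, namely that finitely many vanishing terms may be discarded at each stage. Both arguments are correct.
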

\begin{proof}
By Lemma \ref{lemma:gG}, it suffices to show that the limit \eqref{eq:glimitcondition} exists. Now since $Y$ is $\Theta$-self-similar, there exists a stationary $X$ such that $Y = \mathcal{L}_\Theta X$. Following the lines of the proof of Theorem \ref{theo:gchar2}, we obtain Representation \eqref{eq:grep}, where $G$ is defined as in Lemma \ref{lemma:gG}. Hence  $G\in\mathcal{G}_{{\Theta}}$ by the proof of Theorem \ref{theo:gchar2}.
\end{proof}
\begin{lemma}
\label{lemma:guniqueness}
Let $\Theta = (\theta_1, \dots, \theta_N)\in (0,\infty)^N$ be fixed. Then the stationary increment field $G\in\mathcal{G}_{{\Theta}}$ in Theorems \ref{theo:gchar1} and \ref{theo:gchar2} is unique.
\end{lemma}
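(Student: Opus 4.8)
The plan is to reduce the statement to a purely deterministic, pointwise claim about fields with vanishing square increments. Suppose $G^{(1)},G^{(2)}\in\mathcal{G}_{\Theta}$ both satisfy the characterising Equation \eqref{eq:grecursion2} for the same stationary $X$. Subtracting the two identities and noting that $\langle\hat\Theta,\hat X_t^-\rangle$ depends only on $X$ and $\Theta$, we obtain $\Delta_t G^{(1)}=\Delta_t G^{(2)}$ for every $t$. Put $H=G^{(1)}-G^{(2)}$. Since $\mathbb{Z}^N$ is countable, almost surely and simultaneously for all $t\in\mathbb{Z}^N$ the field $H$ satisfies (a) $\Delta_t H=0$, and (b) $H_t=0$ whenever $\sum_{l=1}^N t_l\in\{0,-1,\dots,-N+1\}$ (the latter because both $G^{(j)}$ lie in $\mathcal{G}_{\Theta}$, for which this vanishing is part of Definition \ref{defi:class}). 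It therefore suffices to show that (a) and (b) force $H\equiv 0$.

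The argument is a two-directional induction on the \emph{diagonal level} $\sigma(t):=\sum_{l=1}^N t_l$. First I would treat the levels $\sigma(t)\le 0$: writing the identity $\Delta_{t+\mathbf{1}}H=0$ (with $\mathbf{1}=(1,\dots,1)$) and isolating the summand with multi-index $\mathbf{1}$ gives
\[
H_t=\sum_{i\in\{0,1\}^N,\ i\neq\mathbf{1}}(-1)^{N+1+\sum_{l}i_l}\,H_{t+\mathbf{1}-i},
\]
so $H_t$ is a linear combination of values of $H$ at the diagonal levels $\sigma(t)+1,\dots,\sigma(t)+N$, all strictly larger than $\sigma(t)$. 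For $\sigma(t)=-N$ these levels lie in $\{-N+1,\dots,0\}$, hence $H_t=0$ by (b); proceeding by downward induction on $\sigma(t)$, each such auxiliary level is either in $\{-N+1,\dots,0\}$ (killed by (b)) or again $\le -N$ (killed by the inductive hypothesis), so $H_t=0$ for all $t$ with $\sigma(t)\le 0$. For the levels $\sigma(t)\ge 1$ I would instead use (a) in the form $H_t=H_t^-=\sum_{i\neq\mathbf{0}}(-1)^{1+\sum_{l}i_l}H_{t-i}$, which expresses $H_t$ through values at levels $\sigma(t)-N,\dots,\sigma(t)-1$, all strictly smaller than $\sigma(t)$ and all either $\le 0$ (already shown to vanish) or in $\{1,\dots,\sigma(t)-1\}$ (covered by upward induction on $\sigma(t)$, the base case $\sigma(t)=1$ leaving this range empty). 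Hence $H\equiv 0$, i.e. $G^{(1)}=G^{(2)}$.

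The only point requiring care — and essentially the reason the exceptional set in Definition \ref{defi:class} consists of exactly the $N$ hyperplanes $\sigma(t)\in\{0,\dots,-N+1\}$ — is the bookkeeping of diagonal levels in the two induction steps: one checks that $\{\sum_{l}i_l:i\in\{0,1\}^N\setminus\{\mathbf{0}\}\}=\{1,\dots,N\}$ and $\{\sum_{l}i_l:i\in\{0,1\}^N\setminus\{\mathbf{1}\}\}=\{0,\dots,N-1\}$, so that in each direction the auxiliary points land exactly in the union of the boundary hyperplanes and the already-resolved levels, with no gap and no level crossing to the wrong sign. With this observation the argument is routine, and it uses no probabilistic input beyond the definition of $\mathcal{G}_{\Theta}$.
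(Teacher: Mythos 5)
Your proof is correct and follows essentially the same route as the paper: both establish $\Delta_t G^{(1)}=\Delta_t G^{(2)}$ and then run a two-directional induction on the diagonal level $\sum_l t_l$, anchored at the $N$ vanishing hyperplanes, using the increment identity at $t$ for the upward step and at $t+\mathbf{1}$ (isolating the $i=\mathbf{1}$ term) for the downward step. The only difference is cosmetic bookkeeping — you induct level by level while the paper advances a band of $N$ consecutive levels at once.
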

\begin{proof}
Assume that $G, G'\in\mathcal{G}_{{\Theta}}$ satisfy the recursive Equation \eqref{eq:grecursion2} of Theorems \ref{theo:gchar1} and \ref{theo:gchar2}. Then, $\Delta_t G = \Delta_t G'$ for all $t\in\mathbb{Z}^N$. It remains to show that this implies $G_t = G'_t$ for all $t$, which we do by induction. 

Since $G_t = G'_t = 0$ for all $t$ such that $\sum_{l=1}^N t_l \in\{0,-1,\dots,-N+1\}$, we may set an induction assumption that there exists $s\in\mathbb{Z}$ such that $G_t = G'_t$ for all $t$ such that $\sum_{l=1}^N t_l\in\{s,s-1,\dots,s-N+1\}$. In order to complete the proof, it remains to show that $G_t = G'_t$ under conditions $\sum_{l=1}^N t_l = s+1$ and $\sum_{l=1}^N t_l = s-N$. Let $\sum_{l=1}^N t_l = s+1$. We now have
\begin{equation*}
\begin{split}
\Delta_t G &= \sum_{(i_1,\dots,i_N)\in\{0,1\}^N} (-1)^{\sum_{l=1}^N i_l} G_{t_1-i_1,\dots,t_N-i_N}\\
&= G_t + \sum_{\substack{(i_1,\dots,i_N)\in\{0,1\}^N\\ i\neq \bf{0}}} (-1)^{\sum_{l=1}^N i_l} G_{t_1-i_1,\dots,t_N-i_N},
\end{split}
\end{equation*}
where for the indices of $G$ it holds that
$$\sum_{l=1}^N (t_l-i_l) \in \{s,s-1,\dots,s-N+1\}.$$
Therefore, by the induction assumption, $G_t = G_t'$ for all $t$ such that $\sum_{l=1}^N t_l = s+1$.
We proceed proving that $G_t=G_t'$ for all $t$ such that $\sum_{l=1}^N t_l= s-N$. Let $\sum_{l=1}^N t_l= s-N$. Now 
\begin{equation*}
\begin{split}
 &(-1)^N G_{t_1,\dots,t_N} + \sum_{\substack{(i_1,\dots,i_N)\in\{0,1\}^N\\ i\neq \bf{1}}} (-1)^{\sum_{l=1}^N i_l} G_{t_1+1-i_1,\dots,t_N+1-i_N} \\
 &=(-1)^N G'_{t_1,\dots,t_N} + \sum_{\substack{(i_1,\dots,i_N)\in\{0,1\}^N\\ i\neq \bf{1}}} (-1)^{\sum_{l=1}^N i_l} G'_{t_1+1-i_1,\dots,t_N+1-i_N},
\end{split}
\end{equation*}
where 
$$\sum_{l=1}^N (t_l+1-i_l) \in \{s,s-1,\dots,s-N+1\}.$$
Hence, by the induction assumption, $G_t = G_t'$ for all $t$ such that $\sum_{l=1}^N t_l= s-N$. This completes the proof.
\end{proof}
The proof of our main theorem, Theorem \ref{theo:giff}, now follows by combining Theorems \ref{theo:gchar1} and \ref{theo:gchar2} together with Lemma \ref{lemma:guniqueness}.

\subsection{Fractional Ornstein-Uhlenbeck fields on $\mathbb{Z}^N$}
\label{subsec:example}
We illustrate our approach by constructing a stationary \emph{fractional Ornstein-Uhlenbeck field} on $\mathbb{Z}^2$. For this purpose, let $G_{t_1,t_2}$ be a discrete fractional Gaussian field on $(t_1,t_2) \in \mathbb{N}^2$. A discrete fractional Gaussian field can be constructed from its continuous time analogue by embedding to the space $\mathbb{N}^2$. That is, let $\tilde{X}$ be a self-similar Gaussian field having stationary rectangular increments such that $\mathbb{E} \tilde{X}^2(1,1) = 1$. Existence of such fields is studied in \cite{makogin2019gaussian}. We can now embed $\tilde{X}$ into $\mathbb{N}^2$ by considering values of $\tilde{X}$ at points $(t_1,t_2) \in \mathbb{N}^2$. Now Definition \ref{defi:class} of $\mathcal{G}_{\Theta}$ allows us to extend $G$ into $\mathbb{Z}^2$ by setting $G_{t,-t} = G_{t,-t-1} = 0$ and requiring that $G$ has stationary increments in the sense of Definition \ref{def:stat-inc}. As $G$ is Gaussian, in view of Remark \ref{rem:integrability}, it follows that our field $G=(G_t)_{t\in \mathbb{Z}^2}$ belongs to $\mathcal{G}_{\Theta}$ for any $\Theta$. This allows us to define a discrete time stationary \emph{fractional Ornstein-Uhlenbeck field of the first kind} by Representation \eqref{eq:grecursion3}. On the other hand, using self-similarity of $\tilde{X}$, we can define a stationary field through Lamperti transformation and obtain, in view of Theorem \ref{thm:lamperti}, stationary \emph{fractional Ornstein-Uhlenbeck field of the second kind} corresponding to a different noise $G \in \mathcal{G}_{\Theta}$. Obviously, this approach can be used to define fractional Ornstein-Uhlenbeck fields in arbitrary dimensions $N\geq 2$.
\section{Conclusions}
\label{sec:conclusion}
In this article we have provided a characterisation of discrete stationary fields in terms of fields having stationary rectangular increments. Our characterisation is analogous to the one-parameter case provided in \cite{voutilainen2017model}. As an example, we have extended the notion of fractional Ornstein-Uhlenbeck processes, introduced in \cite{Cheridito-et-al2003} and \cite{Kaarakka-Salminen2011}, to the multi-parameter case.

Natural further prospects of our study are two folded. Firstly, it would be interesting to generalise the characterisation into the continuous parameter space $t \in \mathbb{R}^N$. In this case however, the increments $\Delta_t G$ are replaced with differentials, and the notion of differential $d_t G$ is more subtle when $t\in \mathbb{R}^N$. This is a topic of a forthcoming article. Secondly, parameter estimation is of paramount importance in statistics, which in our setting would correspond to the estimation of the parameter $\Theta$. This has been a topic of active research in the context of fractional Ornstein-Uhlenbeck processes, see e.g. \cite{Hu-Nualart2010,Sottinen-Viitasaari2018} and the references therein. For a more general setting in the context of multivariate time series, parameter estimation is studied in \cite{voutilainen2021vector}. We believe that ideas and methods presented in \cite{voutilainen2021vector} could be useful in our context as well.

\bibliographystyle{plain}
\bibliography{biblio}

\begin{thebibliography}{10}

\bibitem{bierme2007operator}
Hermine Bierm{\'e}, Mark~M Meerschaert, and Hans-Peter Scheffler.
\newblock Operator scaling stable random fields.
\newblock {\em Stoch. Process. their Appl.}, 117(3):312--332, 2007.

\bibitem{Cheridito-et-al2003}
P.~Cheridito, H.~Kawaguchi, and M.~Maejima.
\newblock Fractional {O}rnstein-{U}hlenbeck processes.
\newblock {\em Electron. J. Probab.}, 8(3):1--14, 2003.

\bibitem{clausel2010gaussian}
Marianne Clausel.
\newblock Gaussian fields satisfying simultaneous operator scaling relations.
\newblock In {\em Recent Developments in Fractals and Related Fields}, pages
  327--341. Springer, 2010.

\bibitem{embrechts2002selfsimilar}
Paul Embrechts.
\newblock Selfsimilar processes.
\newblock In {\em Princeton Series in Applied Mathematics}. Princeton
  University Press, 2002.

\bibitem{genton2007self}
Marc~G Genton, Olivier Perrin, and Murad~S Taqqu.
\newblock Self-similarity and {L}amperti transformation for random fields.
\newblock {\em Stoch. Models}, 23(3):397--411, 2007.

\bibitem{Hu-Nualart2010}
Y.~Hu and D.~Nualart.
\newblock Parameter estimation for fractional {O}rnstein-{U}hlenbeck processes.
\newblock {\em Stat. Probab. Lett.}, 80(11--12):1030--1038, 2010.

\bibitem{Kaarakka-Salminen2011}
T.~Kaarakka and P.~Salminen.
\newblock Fractional {O}rnstein-{U}hlenbeck processes.
\newblock {\em Commun. Stoch. Anal.}, 5(1):121--133, 2011.

\bibitem{Lamperti}
J.W. Lamperti.
\newblock Semi-stable processes.
\newblock {\em Trans. Amer. Math. Soc.}, 104:62--78, 1962.

\bibitem{makogin2015example}
Vitalii Makogin and Yuliya Mishura.
\newblock Example of a {G}aussian self-similar field with stationary
  rectangular increments that is not a fractional {B}rownian sheet.
\newblock {\em Stoch. Anal. Appl.}, 33(3):413--428, 2015.

\bibitem{makogin2019gaussian}
Vitalii Makogin and Yuliya Mishura.
\newblock Gaussian multi-self-similar random fields with distinct stationary
  properties of their rectangular increments.
\newblock {\em Stoch. Models}, 35(4):391--428, 2019.

\bibitem{samorodnitsky1996stable}
Gennady Samorodnitsky, Murad~S Taqqu, and RW~Linde.
\newblock Stable non-{G}aussian random processes: stochastic models with
  infinite variance.
\newblock {\em Bull. London Math. Soc.}, 28(134):554--555, 1996.

\bibitem{Sottinen-Viitasaari2018}
T.~Sottinen and L.~Viitasaari.
\newblock Parameter estimation for the {L}angevin equation with
  stationary-increment {G}aussian noise.
\newblock {\em Stat. Inference Stoch. Process.}, 21(3):569--601, 2018.

\bibitem{viitasaari2016}
Lauri Viitasaari.
\newblock Representation of stationary and stationary increment processes via
  {L}angevin equation and self-similar processes.
\newblock {\em Stat. Prob. Lett.}, 115:45--53, 2016.

\bibitem{voutilainen2020modeling}
Marko Voutilainen.
\newblock Modeling and estimation of multivariate discrete and continuous time
  stationary processes.
\newblock {\em Front. Appl. Math. Stat.}, 6, 2020.

\bibitem{voutilainen2017model}
Marko Voutilainen, Lauri Viitasaari, and Pauliina Ilmonen.
\newblock On model fitting and estimation of strictly stationary processes.
\newblock {\em Mod. Stoch.: Theory Appl.}, 4(4):381--406, 2017.

\bibitem{voutilainen2021vector}
Marko Voutilainen, Lauri Viitasaari, Pauliina Ilmonen, Soledad Torres, and
  Ciprian Tudor.
\newblock Vector-valued generalized {O}rnstein--{U}hlenbeck processes:
  {P}roperties and parameter estimation.
\newblock {\em Scand. J. Stat.}, 49(3):992--1022, 2022.

\end{thebibliography}
\end{document}